\IfFileExists{\currfilename}{\embeddedfile{sourcefile}{\currfilename}}{}
\title{Equitable $[[2,10],[6,6]]$-partitions \\of the $12$-cube%
\thanks{MSC: 05B15, 94D10
}
\thanks{A part of the work was funded
by the Russian Science Foundation (grant 18-11-00136);
the other part was supported within the framework 
of the state contract of the Sobolev Institute
of Mathematics (FWNF-2022-0017).
}
}
\author{Denis S. Krotov%
\thanks{Sobolev Institute of Mathematics, Novosibirsk, Russia.
E-mail: \texttt{krotov@math.nsc.ru}}
}
\date{}
\newtheorem{lemma}{Lemma}
\newtheorem{theorem}{Theorem}
\theoremstyle{remark}
\newtheorem{remark}{Remark}
\newtheorem{problem}{Problem}
\begin{document}

\maketitle

\begin{abstract}
We describe the computer-aided classification of equitable partitions of the $12$-cube with quotient matrix $[[2,10],[6,6]]$, or, equivalently, simple orthogonal arrays OA$(1536,12,2,7)$, or order-$7$ cor\-re\-la\-tion-immune Boolean functions in $12$ variables with $1536$ ones (which completes the classification of unbalanced order-$7$ correlation-immune Boolean functions in $12$ variables). We find that there are $103$ equivalence classes of the considered objects, and there are only two almost-OA$(1536,12,2,8)$ among them. Additionally, we find that there are $40$ equivalence classes of pairs of disjoint simple OA$(1536,12,2,7)$ (equivalently, equitable partitions of the $12$-cube with quotient matrix $[[2,6,4], [6,2,4], [6,6,0]]$) and discuss the existence of a non-simple \linebreak[5] OA$(1536,12,2,7)$.

Keywords: orthogonal arrays, correlation-immune Boolean functions, equitable partitions, perfect colorings, intriguing sets.
\end{abstract}

\section{Introduction}

The main result of the current work is 
completing the classification 
of unbalanced order-$7$ correlation-immune Boolean functions in $12$ variables
(equivalently, equitable $2$-partitions of the $12$-cube with asymmetric quotient matrix having an eigenvalue $-4$).
We solve the case of functions with $1536$ ones, finding $103$ equivalence classes,
while the functions with $1024$ ones and with $1792$ ones from the considered family
were characterized in~\cite{KroVor:2020} 
($16$ equivalence classes and $2$ equivalence classes, respectively).
Computationally, the classification considered in the current paper
is harder than the one in~\cite{KroVor:2020}; 
it requires the subdivision of the classified
family into three subfamilies, each of which is solved with a special
modification of the algorithm, and still spends almost thirty CPU years.
The classification~\cite{KroVor:2020} of functions with $1024$ ones
was much more easy and straightforward because of smaller number of ones,
while the $1792$-ones case was solved using the Fourier analysis, 
which required more analytical than computational work
(the crucial fact for the success of the Fourier-based classification was 
that the average density $1792/2^{12}=9/16$ of ones of the function was not a multiple
of $1/8$, which is not true for the  $1536$-ones case).

The \emph{$n$-cube} is a graph $Q_n$ on the set $\{0,1\}^n$ of binary $n$-tuples, 
where two  $n$-tuples are adjacent if and only if they differ in exactly one 
position.
A partition $(C_0,\ldots,C_{k-1})$ 
of the vertex set of $Q_n$
into two \emph{cells}
is called an 
\emph{equitable $k$-partition} (of $Q_n$)
with {\it quotient} $k \times k$ matrix $S=(S_{ij})$ 
if for all $i,j \in \{0,\ldots,k-1\}$ any vertex of~$C_i$ 
has exactly $S_{ij}$ neighbors in~$C_j$.
A (Boolean) function $f: \{0,1\}^n \to \{0,1\}$ is called \emph{unbalanced} 
if the number of its ones is different from~$0$, $2^{n-1}$, and~$2^n$.
It is called \emph{$t$-th order correlation immune} if the number of ones (equivalently, zeros)
$(x_1,\ldots,x_n): f (x_1,\ldots,x_n)=1 $
is statistically independent on the values of any $t$ arguments;
that is, it is the same in all subgraphs of~$Q_n$ isomorphic to~$Q_{n-t}$.
 Correlation immune Boolean functions of order~$t$
 are known to be equivalent to simple orthogonal arrays OA$(M,n,2,t)$,
 where $M$ is the number of ones of the function.
Fon-Der-Flaass \cite{FDF:CorrImmBound} proved that the correlation-immunity order of an unbalanced Boolean function in $n$ variables cannot exceed 
$2n/3 - 1$; moreover, any unbalanced Boolean function~$f$ of correlation-immunity order $2n/3 - 1$ corresponds
to an equitable $2$-partition of the $n$-cube $Q_n$ with quotient matrix $[[a,b],[c,d]]$, 
where $a+b=c+d=n$, $a-c=-n/3$, and the number of ones of~$f$ is $2^n c/(b+c)$.
Nowadays, there are three known families of quotient matrices corresponding to such functions: 
$[[0,3T],[T,2T]]$, $[[T,5T],[3T,3T]]$
(found in \cite{Tarannikov2000}), $[[3T,9T],[7T,5T]]$ (found in \cite{FDF:12cube.en}). 
For each of the matrices 
$[[0,3],[1,2]]$, $[[1,5],[3,3]]$, and $[[0,6],[2,4]]$, a function
is unique up to equivalence.
Kirienko~\cite{kirienko2002} found that there are exactly~$2$ inequivalent unbalanced Boolean functions in $9$ variables  attaining the bound on the order of correlation immunity (the corresponding quotient matrix is $[[0,9],[3,6]]$).   
Fon-Der-Flaass started the investigation 
of equitable $2$-partitions of~$Q_{12}$ attaining 
the correlation-immunity bound:
in~\cite{FDF:12cube.en}, it was shown that equitable partitions 
with quotient matrix $[[1,11],[5,7]]$
do not exist, while  equitable partitions with quotient matrix $[[3,9],[7,5]]$ were built. In~\cite{KroVor:2020}, combining the theoretical and computational
approaches, equitable partitions with quotient matrices $[[0,12],[4,8]]$ and $[[3,9],[7,5]]$ were classified up to equivalence. The current work is aimed on 
the remaining case $[[2,10],[6,6]]$.
This case additionally motivated 
by the inner order~$2$ of the first cell 
of such a partition, which means that this cell 
induces a $2$-factor in~$Q_n$, i.e., 
the union of disjoint cycles at distance at least~$2$ from each other.
As we will see from the result of our classification,
the length of cycles can vary over the all family of such partitions, 
as well as in one partition.

Throughout this paper, we use the notation 
$S=[[S_{++},S_{+-}],[S_{-+},S_{--}]]=[[2,10],[6,6]]$
and the term $S$-partition in the meaning
``equitable partition of the $12$-cube with quotient matrix~$S$''.

The results of the present paper can be treated in terms 
of orthogonal arrays. 
A binary \emph{orthogonal array} OA$(M,n,2,t)$
is a multiset of size 
$M$ of vertices on the $n$-cube
such that every $(n-t)$-subcube contains exactly $M/2^t$ of them.
Simple orthogonal arrays (i.e., without multiplicity larger than~$1$)
naturally correspond to $t$-th order correlation immune
Boolean functions in $n$ arguments with $M$ ones: 
the characteristic function of a binary simple orthogonal array of strength~$t$
is a $t$th order correlation immune Boolean function, and vice versa.


In Section~\ref{s:class}, we describe the computer-aided 
classification of equitable \linebreak[5] $[[2,10],[6,6]]$-partitions of 
the $12$-cube. The general approach for the classification
of equitable $2$-partitions of the hypercube is described in
Sections~\ref{s:local} and~\ref{s:covering}; 
the approach is the further development of the algorithms
used in~\cite{KroVor:2020} and~\cite{Kro:OA13}.
However, straightforward applying this approach
to the current parameters requires unreasonable computational resources.
In Section~\ref{s:reduce}, we discuss the possibility
to reduce the amount of computation
by dividing the considered class of equitable partitions
into subclasses, ``square'' and ``non-square'' partitions, 
``heavy'' and ``light'' partitions.
The classification of equitable partitions 
from different subclasses is described in 
Sections~\ref{s:sq}, \ref{s:nl}, and~\ref{s:ns}.

In Section~\ref{s:prop}, we discuss the results of the classification and properties of the found equitable partitions and orthogonal arrays,
including the lengths of the induced cycles
(Section~\ref{s:cycle}),
the order of the automorphism group (Section~\ref{s:aut}),
the Fourier coefficients and being so-called 
``almost orthogonal array'' of strength~$8$ 
(Section~\ref{s:F}; spoiler: only $2$ of $103$ 
 simple OA$(1536,12,2,7)$ and only $1$
 of $16$ OA$(1024,12,2,7)$ satisfy this nice property),
 relations with known constructions (Section~\ref{s:known}).
 
 In Section~\ref{s:related}, we discuss related objects.
 In Section~\ref{s:split},
 equitable $3$-partitions
 with quotient matrix $[[2,6,4],[6,2,4],[6,6,0]$
 are considered 
 (equivalently, pairs of disjoint  simple OA$(1536,12,2,7)$;
 we find that there are $40$ equivalence classes of such pairs),
 which can also be treated as $3$-valued functions
 attaining the correlation-immunity bound.
 In Section~\ref{s:n-s}, we discuss the existence of
 non-simple OA$(1536,12,2,7)$.
 Non-simple orthogonal arrays cannot be treated as Boolean functions;
 however, the correlation-immunity bound 
 can be generalized to non-simple binary orthogonal arrays~\cite{Khalyavin:2010.en}, and the strength $7$ of OA$(1536,12,2,7)$ attains that 
 generalised bound, which motivates their further study.

\section{Classification}\label{s:class}

The general classification approach we use 
is similar to the one used in~\cite{Kro:OA13}
for $[[0,13],[3,10]]$-partitions of $Q_{13}$,
which, in its turn, is a development
of the algorithm used in~\cite{KroVor:2020} for $[[0,12],[4,8]]$-partitions of $Q_{12}$.
The description of the algorithm 
(in particular, the definition of local partitions) 
in the following two subsections
slightly differs from that in~\cite{Kro:OA13} because 
we now have no zeros in the quotient.
However, the main difference and originality of the current search 
is described in Subsection~\ref{s:reduce},
where the class of all $[[2,10],[6,6]]$-partitions
is divided into three subclasses, 
according to containing special configurations in the partition.
As shown preliminary experiments, without this subdivision,
the classification would hardly be possible with modern
computational possibilities.

Our goal is to classify the $[[2,10],[6,6]]$-partitions of $Q_{12}$
up to equivalence, so the definition of the equivalence plays a key role.
Two subsets $C$ and $C'$ of $\{0,1\}^n$ are \emph{equivalent}
if there is an automorphism $\pi$ of the graph $Q_n$
such that $\pi(C)=C'$.
Two $2$-partitions 
$(C_+,C_-)$ and $(C'_+,C'_-)$ of $\{0,1\}^n$ 
are \emph{equivalent} if $C_+$ is equivalent to $C'_+$. 
(In this paper, we compare only equitable $2$-partitions with the same asymmetric
quotient matrix, so the first cell of one partition cannot be equivalent to the second cell of the other).
An \emph{automorphism} of a
subset $C$ of $\{0,1\}^n$ (and of the partition $(C,\{0,1\}^n\backslash C )$ )
is an automorphism $\pi$ of $Q_n$ such that 
$\pi(C)=C$. 
The set $\mathrm{Aut}(C)$ of all automorphisms of $C$ 
forms a group with respect to the composition,
the \emph{automorphism group} of $C$.
The number of $2$-partitions
equivalent to a given $2$-partition $(C_+,C_-)$ 
is calculated as $|\mathrm{Aut}(\{0,1\}^n)| / |\mathrm{Aut}(C_+)| $,  
where $\mathrm{Aut}(\{0,1\}^n)$ is the set of automorphisms of $Q_n$,
$ |\mathrm{Aut}(\{0,1\}^n)| = 2^n\cdot n! $.

\subsection{Local partitions}\label{s:local}
For classification by exhaustive search, 
we use an approach based 
on the local properties of the equitable partitions.
We define objects that satisfy that properties on the words
of small weight
(the \emph{weight} of a binary word is the number of ones in it).
Say that the pair of disjoint sets $P_+$,
$P_-$ 
of vertices of $Q_{12}$ is an \emph{$(r_0,r_1)$-local partition}
(sometimes, we will omit the parameters $(r_0,r_1)$)
if 
\begin{itemize}

 \item[(I)] 
 $P_+ \cup P_-$ 
are the all words starting with $0$
and having weight at most $r_0$
or starting with $1$
and having weight at most $r_1$;

\item[(II)] 
$P_+$ contains the all-zero word $\bar 0$;

\item[(III)] 
$P_-$ contains $100000000000$;

\item[(IV)] 
the neighborhood of every vertex 
$\bar v=(v_1,\ldots,v_{12})$
of weight less than $r_{v_1}$ satisfies the local condition 
from the definition 
of an equitable partition 
with quotient matrix 
$[[2,10],[6,6]]$ 
(that is, if $\bar v\in P_+$ then 
$\bar v$ has $2$ neighbors in $P_+$
and $6$ neighbors in $P_+$.
if $\bar v\in P_-$ then
the neighborhood has exactly $6$
elements in $P_+$ and $6$ in $P_-$);

\item[(V)] 
Every vertex from $P_+$ has at most $2$ neighbors in $P_+$.
\end{itemize}

Two $(r_0,r_1)$-local partitions
$(P_+,P_-)$ and $(P'_+,P'_-)$
are \emph{equivalent} (\emph{$(r_0,r_1)$-equivalent})
if there is a permutation
of coordinates that fixes the first
coordinate and sends
$P_+$ to $P'_+$.

By an $r$-local partition, we mean an $(r,r)$-local partition, 
but the equivalence for this notion is counted differently:
Two $r$-local partitions
$(P_+,P_-)$ and $(P'_+,P'_-)$
are \emph{equivalent} (\emph{$r$-equivalent})
if there is a permutation
of coordinates that sends
$P_+$ to $P'_+$.
So, equivalent $(r,r)$-local partitions are necessarily $r$-equivalent 
(equivalent as $r$-local partitions),
but not vice versa.

The general approach is to classify 
all inequivalent 
$(r_0,r_1)$-local partitions subsequently
for $(r_0,r_1)$ equal
$(2,2)$, $(2,3)$, $(3,3)$, $(3,4)$, $(4,4)$, $(12,12)$, where 
$(12,12)$ corresponds to the complete equitable partitions.
In an obvious way, every equitable partition $(C,\overline C)$ such that $\bar 0\in C$ includes
a $(4,4)$-local partition $(P_+^{(4,4)},P_-^{(4,4)})$, 
$P_+^{(4,4)} \subset C$ and $P_-^{(4,4)} \subset \overline C$, 
every $(4,4)$-local partition $(P_+^{(4,4)},P_-^{(4,4)})$ includes
a $(3,4)$-local partition $(P_+^{(3,4)},P_-^{(3,4)})$, 
$P_+^{(3,4)} \subseteq P_+^{(4,4)}$ and $P_-^{(3,4)} \subseteq P_-^{(4,4)}$, 
and so on.
So, the strategy is to reconstruct, 
in all possible ways, 
a $(r_0,r_1)$-local partition from
each of the inequivalent $(r_0-1,r_1)$-local 
or $(r_0,r_1-1)$-local partitions,
and then to choose and keep only inequivalent solutions,
one representative for each equivalence class found. 
The details of the reconstruction are described in the next subsection
(except the reconstruction from $(4,4)$ to $(12,12)$, 
which is straightforward from the correlation immunity), 
followed by Section~\ref{s:reduce}, 
where we discuss how to reduce the amount of computations
for the concrete quotient matrix $[[2,12],[6,6]]$.

\subsection{A reconstruction step}\label{s:covering}
Assume that we have a $(r_0-1,r_1)$-local partition
$(P_+,P_-)$ (the case $(r_0,r_1-1)$ is considered similarly).
To find all possible $(r_0,r_1)$-local partitions $(R_+,R_-)$
that include it, 
we construct the following instance of the exact cover problem.
\begin{itemize}
 \item Among all words $\bar x$ of weight $r_0$ and starting from $0$, 
we choose only those whose inclusion in the first cell
does not contradict (V), for $\bar x$ itself and for all its neighbors.
Call them \emph{candidates} (for the inclusion in $R_+\backslash P_+$).
 \item Next, for each words $\bar v$ of weight $r_0-1$ and starting from $0$,
we count the number $\alpha_{\bar v}$
of its neighbors to add to the first cell for (IV) to be satisfied.
  If $\bar v \in P_i$, $i\in\{+,-\}$, 
then $\alpha_{\bar v}=S_{i+}-\beta_{\bar v}$, where $\beta_{\bar v}$
is the number of neighbors of  $\bar v$ that are already in $P_+$.
\item We construct a $0,1$-matrix $M=(m_{\bar x,\bar v})$ 
whose rows are indexed by candidates, 
columns are indexed by the words of weight $r_0-1$ starting from $0$,
and $m_{\bar x,\bar v}=1$ if and only if $\bar x$, $\bar v$ are neighbors.
\item Now, finding $(r_0,r_1)$-local continuation of $(P_+,P_-)$ is equivalent
to finding a set of rows of $M$ whose sum is the row $(\alpha_{\bar v})$.
This is an instance of the so-called exact multiple cover problem 
(multiple, because the coefficients $\alpha_{\bar v}$ can be larger than $1$),
and can be solved with an appropriate software; 
we use \texttt{libexact}~\cite{KasPot08}.
\item For each exact cover found, we form $R_+$ by adding 
the indices of the chosen rows to $P_+$,
while $R_-$ is found as the complement of $R_+$.
\end{itemize}

An important step, traditionally called the \emph{isomorph rejection},
is choosing nonequivalent representatives from the set of
all found solutions, intermediate or final. 
A standard technique to deal with equivalence, 
see~\cite[Sect.\,3.3]{KO:alg},
is to represent sets (in our case, $R_+$) 
by graphs and use a software~\cite{nauty2014}
recognizing the isomorphism of graphs. 
At this point, it is possible to make some partial \emph{validation} of 
the intermediate results by double-counting the total number of found
continuations $(R_+,R_-)$ of $(P_+,P_-)$ with the help 
of the orbit-stabilizer theorem, see~\cite[Sect. 10.2]{KO:alg} for the general strategy. Here, we do not stop on details of the isomorph rejection and the validation, because they are the same as in~\cite{Kro:OA13},
and focus on the improvements specific for 
the current parameters of equitable partitions.

\subsection{How to reduce the amount of computation}\label{s:reduce}
In the case when $r_0=r_1$, it is sufficient to continue calculation for a representative
of every equivalence class of $r$-local partitions, $r:=r_0=r_1$ 
(we apply this strategy for $r=2$).
However, the choice of the representative becomes important if
we want to make the experiment repeatable with the same intermediate numerical results:
for different representatives of the same equivalence class $E$
of $r$-local partitions, the number of $(r,r+1)$-local continuations can be different,
but it is the same for equivalent $(r,r)$-local partitions.
So, for the subdivision $\{E_1,\ldots,E_s\}$ of $E$ into $(r,r)$-equivalence classes,
we need to choose one $(r,r)$-equivalence class explicitly.
For this reason, 
we require that the representative chosen for further calculation
must have the minimum number of inequivalent $(r,r+1)$-local continuations.
If, with this condition, 
there is still more than one candidate $(r,r)$-equivalence class,
then we choose the one with the minimum number 
of inequivalent $(r+1,r+1)$-local continuations, then $(r+1,r+2)$,
and so on.
This approach implies that we produce some redundant computation 
(to make the choice, we need to find the  $(r,r+1)$-local
continuations for each subclass $E_1$, \ldots, $E_s$, 
and sometimes need to find the $(r+1,r+1)$-local continuations
for more than one of them,
while only one of them is used for the classification), 
but makes the choice explicit and the experiment
completely repeatable.
It reduces the total number of calculations 
when $s$ is large (up to $10$); we essentially use this 
approach in Section~\ref{s:sq}.

The general algorithm described above 
works faster 
than the similar the algorithm based 
on $r$-local partitions only, 
but still very heavy for the considered parameters,
without taking into account the specific properties
of the classified partitions.
To make the classification doable,
we distinguish the following subfamilies of the 
family of all $[[2,10],[6,6]]$-equitable partitions
of the $12$-cube.

We say that a $[[2,10],[6,6]]$-equitable (local) 
partition $(C_+,C_-)$ of $Q_{12}$
is \emph{square} if there are four vertices of $C_+$
that induce a square subgraph of $Q_{12}$ 
(for square local partitions, we additionally require one of these vertices to be the all-zero word).
A $[[2,10],[6,6]]$-equitable (local) 
partition $(C_+,C_-)$
is \emph{square-free} if there are no four vertices of $C_+$
inducing a square subgraph of $Q_{12}$.

We say that a $[[2,10],[6,6]]$-equitable
partition $(C_+,C_-)$ of $Q_{12}$
is \emph{heavy} (\emph{light})
if there is (there is no) a subgraph of $Q_{12}$
isomorphic to the cube $Q_3$ and having at least 
$5$ five vertices from $C_+$.

The classification is divided into the following three
stages; each stage is considered in a separate subsection below.
At stage one, we classify the square partitions. 
Stage two deals with heavy partitions,
and its main result is that all heavy 
$[[2,10],[6,6]]$-equitable partitions of $Q_{12}$
are square.
Stage three is the classification of 
the square-free partitions; 
by the results of the second stage they are all light.

\subsection{The square partitions}\label{s:sq}
Up to equivalence, any square $S$-partition or square local partition contains the four words $\bar0=0...000$, 
$\bar e_{12}=0...001$, $\bar e_{11}=0...010$, $\bar e_{11}+\bar e_{12}=0...011$ in its first cell.
\begin{lemma} There are $66462606$ $2$-local partitions
$(C_+,C_-)$ with 
$\bar0$, 
$\bar e_{12}$, 
$\bar e_{11}$, 
$\bar e_{11}+\bar e_{12}\in C_+$,
 which are partitioned into $60$
 $2$-equivalence classes or 
 $286$ $(2,2)$-equivalence classes.
\end{lemma}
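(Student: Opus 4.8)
The plan is to fix the square explicitly and reduce the entire count to an enumeration of regular graphs. Up to equivalence the four given words lie in the first cell, so throughout I take $\bar0,\bar e_{11},\bar e_{12},\bar e_{11}+\bar e_{12}\in P_+$. First I would run the local condition (IV) through the words of weight at most $1$. Since $\bar0\in P_+$ by (II) and $\bar e_1\in P_-$ by (III), and $\bar e_{11},\bar e_{12}\in P_+$ by assumption, the two first-cell neighbours of $\bar0$ are exactly $\bar e_{11},\bar e_{12}$, forcing $\bar e_2,\dots,\bar e_{10}\in P_-$. Now (IV) at $\bar e_{11}$ says it has exactly two first-cell neighbours, namely $\bar0$ and (by assumption) $\bar e_{11}+\bar e_{12}$; hence $\bar e_{11}+\bar e_j\in P_-$ for every $j\neq12$, and symmetrically $\bar e_{12}+\bar e_j\in P_-$ for every $j\neq11$. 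Thus the whole weight-$\le1$ layer and every weight-$2$ word using coordinate $11$ or $12$ is determined.

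Next I would isolate the remaining freedom. The still-undetermined words are precisely the weight-$2$ words $\bar e_i+\bar e_j$ with $1\le i<j\le10$, that is, the $\binom{10}{2}=45$ edges of a complete graph on the coordinate set $\{1,\dots,10\}$; declaring $\bar e_i+\bar e_j\in P_+$ defines a graph $H$ on these $10$ vertices. Each $\bar e_i$ with $i\le 10$ lies in $P_-$, has the single first-cell neighbour $\bar0$ among lower weights, and has $\bar e_i+\bar e_{11},\bar e_i+\bar e_{12}\in P_-$; so (IV) demands $1+\deg_H(i)=6$, i.e.\ $H$ is $5$-regular, and conversely every $5$-regular $H$ gives a valid local partition. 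Condition (V) holds automatically, since within the weight-$\le2$ part every weight-$2$ word neighbours only the two weight-$1$ words below it, giving at most two first-cell neighbours. Hence the number of such $2$-local partitions equals the number of labelled $5$-regular graphs on $10$ vertices, which is $66462606$.

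Finally I would pass to the equivalence counts. Any coordinate permutation carrying one of these partitions to another preserves weight, hence preserves the first-cell weight-$1$ set $\{\bar e_{11},\bar e_{12}\}$ and the second-cell weight-$1$ set $\{\bar e_1,\dots,\bar e_{10}\}$; so it lies in $\mathrm{Sym}(\{1,\dots,10\})\times\mathrm{Sym}(\{11,12\})$, where the second factor acts trivially on $H$ and the first acts as graph isomorphism. Therefore the $2$-equivalence classes are the isomorphism classes of $5$-regular graphs on $10$ vertices, of which there are $60$. For $(2,2)$-equivalence the permutation must moreover fix coordinate $1$, so the acting group is the stabiliser $\mathrm{Sym}(\{2,\dots,10\})$ of the vertex $1$, and the classes correspond to such graphs with one distinguished vertex; their number is $\sum_{[H]}(\text{number of vertex-orbits of }\mathrm{Aut}(H))$, summed over the $60$ isomorphism classes $[H]$, which equals $286$.

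The reduction itself is routine; the main obstacle is purely enumerative, namely pinning down the three integers $66462606$, $60$, and $286$ for $5$-regular graphs on $10$ vertices. These agree with the standard tabulations (note that a $5$-regular graph on $10$ vertices is necessarily connected, and there are exactly $60$ up to isomorphism), and they can also be reproduced directly by the exact-cover enumeration and the \texttt{nauty}-based isomorph rejection described in the previous subsections, which is how the intermediate counts are validated.
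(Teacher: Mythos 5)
Your proposal is correct and follows essentially the same route as the paper: the bijection between these $2$-local partitions and $5$-regular graphs on the $10$ coordinates $\{1,\dots,10\}$, with $2$-equivalence corresponding to graph isomorphism ($60$ classes) and $(2,2)$-equivalence to isomorphism of graphs with a marked vertex, counted as $\sum_{[H]}(\text{vertex orbits of }\mathrm{Aut}(H))=286$. Your derivation of the forced structure (which weight-$1$ and weight-$2$ words must lie in $P_-$) and of why condition (V) is automatic is in fact spelled out more carefully than in the paper's own proof, but the underlying argument and the appeal to the standard enumerations $66462606$ and $60$ are identical.
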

\begin{proof}
Assume that we have a $2$-local partition $(C_+,C_-)$ with 
$\bar0$, 
$\bar e_{12}$, 
$\bar e_{11}$, 
$\bar e_{11}+\bar e_{12}\in C_+$.
Every weight-$2$ word from $C+$ is adjacent to exactly two 
words from $\bar e_{1}$, \ldots, $\bar e_{10}$.
On the other hand, 
every word from $\bar e_{1}$, \ldots, $\bar e_{10}$
has exactly $5$ weight-$2$ neighbors in $C_+$ (totally
$S_{-,+} = 6$ neighbors in $C_+$, but one of them is $\bar 0$).
Clearly, this incidence structure corresponds to a $5$-regular
graph on the $10$ vertices $\bar e_{1}$, \ldots, $\bar e_{10}$.
It is easy to see that this correspondence with the class
of $5$-regular
graphs of order $10$ is one-to-one; 
moreover, two graphs are isomorphic if and only if the corresponding $2$-local partitions are equivalent.
The number of $5$-regular
graphs on $10$ labelled vertices is $66462606$, see
 http://oeis.org/A059441,
 and the number of their isomorphism classes is $60$, 
 see http://oeis.org/A006821.
 The number of $(2,2)$-equivalence classes is counted computationally;
 it equals the number of vertex orbits summed over the $60$
 non-isomorphic $5$-regular
graphs of order $10$ 
(equivalently, the number of non-isomorphic $5$-regular graphs
of order $10$ 
with one marked vertex, 
which corresponds to the first coordinate of the $(2,2)$-local partition).
\end{proof}

Next, realizing the strategy described in Section~\ref{s:reduce},
we choose $60$ representatives of 
$(2,2)$-local partitions, and for each of them find
all non-equivalent $(2,3)$-local, $(3,3)$-local, 
$(3,4)$-local, $(4,4)$-local, and finally $S$- partitions.
The local partitions that continue the chosen $60$ representatives
will be called \emph{leading}, in contrast to the 
local partitions that continue the rest $286-60$ $(2,2)$-equivalence classes. Up to equivalence, we have the following 
numbers of square partitions:

\begin{itemize}
 \item there are $37141023$ (few CPU days) 
 square $(2,3)$-local partitions,
 $4979729$ of them are leading;
 
 \item there are $659276500$ square $(3,3)$-local partitions,
 $94275707$ (736 CPU days) of them are leading; there are $65945212$ $3$-local partitions (comparing $94275707$ and $65945212$, we see that applying the strategy described in Section~\ref{s:reduce} 
 can reduce the number of further calculation by factor about $1.4$; it was decided not to implement this improvement);
 
 \item there are $16535880038$ (1795 CPU days) 
 leading square $(3,4)$-local partitions;
 
  \item there are $3111$ square $(4,4)$-local partitions,
 $667$ (2322 CPU days) of them are leading; 
 there are $429$ square $4$-local partitions, and all of them continue to $S$-partitions;
 
 \item there are $77$ square $S$-partitions.
\end{itemize}


\subsection{The heavy partitions}\label{s:nl}

Define the $(1,2)$-local partition $(B_+,B-)$ where
$B_+ = \{ \bar e_1+\bar e_2$, 
$\bar e_2$, 
$\bar 0$, 
$\bar e_3$,
$\bar e_3+\bar e_1$,
$\bar e_1+\bar e_4$, $\bar e_1+\bar e_5$, $\bar e_1+\bar e_6\}$.

\begin{lemma}
Every heavy $S$-partition is equivalent to a partition
continuing the $(1,2)$-local partition $(B_+,B-)$.
\end{lemma}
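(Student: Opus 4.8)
The plan is to start from a cube witnessing heaviness, determine the few ways in which $C_+$ can meet it, and then push it into the required position by an automorphism of $Q_{12}$. By definition there is a subgraph $D\cong Q_3$ of $Q_{12}$ with $k\ge 5$ vertices in $C_+$; let $H$ be the subgraph of $D$ induced by $D\cap C_+$. Since $S_{++}=2$, each vertex of $C_+$ has at most two $C_+$-neighbours in all of $Q_{12}$, so $H$ has maximum degree at most $2$, and, $Q_3$ being bipartite, $H$ is a disjoint union of paths and even cycles.

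My first task would be to classify $D$ up to the symmetry of the cube. If $k\in\{7,8\}$, then some vertex of $D\cap C_+$ (for $k=7$, the antipode in $D$ of the unique $C_-$-vertex; for $k=8$, any vertex) keeps all three of its $D$-neighbours in $C_+$ and thus has degree $3$ in $H$, contradicting $S_{++}=2$; hence $k\le 6$. For $k=6$ the two $C_-$-vertices of $D$ must be antipodal in $D$ (otherwise a short check produces a remaining vertex keeping all three neighbours), and then $H$ is a single $6$-cycle. For $k=5$ the three $C_-$-vertices can induce neither two edges (that would make $H$ a $5$-cycle, impossible in a bipartite graph) nor no edge (an independent $C_-$-triple occupies three vertices of one bipartition class of $D$, whereupon the fourth vertex of that class lies in $C_+$ with all three neighbours in the opposite class, which lies in $C_+$, again a degree-$3$ vertex of $H$); so they induce exactly one edge, one $C_-$-vertex is isolated, and $H$ is a path $P_5$. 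In both surviving cases $D$ therefore contains a \emph{fully surrounded} $C_-$-vertex $x$, namely the isolated $C_-$-vertex when $k=5$ and either of the two $C_-$-vertices when $k=6$: all three $D$-neighbours $n_1,n_2,n_3$ of $x$ lie in $C_+$.

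I would then select, among $n_1,n_2,n_3$, a vertex $y$ whose two further $D$-neighbours — the two ``square partners'' $x+n_i+n_j$ — also lie in $C_+$. Such a $y$ exists: at most one of the three distance-$2$ vertices $x+n_i+n_j$ of $D$ lies in $C_-$ (none when $k=6$; when $k=5$ the unique $C_-$-edge must join the antipode of $x$ to exactly one of them), and a bad square partner disqualifies only the two candidates $n_i$ incident to it, leaving at least one admissible $y$. For this $y$, its two $C_+$-neighbours are precisely the two square partners, and completing the two squares of $D$ that contain the edge $xy$ returns $n_i$ and $n_j$, the two neighbours of $x$ other than $y$, which lie in $C_+$. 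Next I would apply an automorphism of $Q_{12}$ taking $y\mapsto\bar 0$ and $x\mapsto\bar e_1$, the two $C_+$-neighbours of $y$ to $\bar e_2,\bar e_3$, and hence the two square completions to $\bar e_1+\bar e_2,\bar e_1+\bar e_3$. Then $\bar 0$ has exactly the two $C_+$-neighbours $\bar e_2,\bar e_3$, while $\bar e_1\in C_-$ has, by $S_{-+}=6$, exactly six $C_+$-neighbours, three of which ($\bar 0,\bar e_1+\bar e_2,\bar e_1+\bar e_3$) are already fixed; a final permutation of the coordinates $4,\dots,12$, which fixes all vertices placed so far, sends the remaining three to $\bar e_1+\bar e_4,\bar e_1+\bar e_5,\bar e_1+\bar e_6$. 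Restricting the transformed partition to the words of the $(1,2)$-local domain then reproduces $(B_+,B_-)$ exactly.

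The genuine obstacle, I expect, is the finite but delicate classification in the second paragraph: the two ``degree-$3$'' arguments that eliminate the independent and the two-edge $C_-$-patterns for $k=5$ (and the analogous eliminations for $k\ge 6$), together with the bookkeeping that guarantees an admissible $y$. Once these are in place, the final normalisation is a routine coordinate change, and confirming that the restriction equals $(B_+,B_-)$ is a direct check on the $24$ words of the local domain.
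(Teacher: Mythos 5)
Your proof is correct and follows essentially the same route as the paper: classify the possible traces of $C_+$ on the witnessing $3$-cube using the maximum-degree-$2$ constraint coming from $S_{++}=2$ (the paper simply asserts that this trace is a $5$-path or a $6$-cycle, which you re-derive by case analysis on $|D\cap C_+|$), then move the configuration into standard position by an automorphism of $Q_{12}$ and use $S_{-+}=6$ to place the three remaining $C_+$-neighbours of $\bar e_1$ in coordinates $4$, $5$, $6$. Your ``fully surrounded vertex $x$ / admissible neighbour $y$'' device is just a coordinate-free phrasing of the paper's normalisation, which puts the middle vertex of the $5$-path at $\bar 0$ and the surrounded $C_-$-vertex at $\bar e_1$.
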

\begin{proof}
(i) It is easy to observe that if $C$
is a vertex set of $Q_3$ of cardinality
at least $5$ and such that the induced subgraph
has no vertices of degree more than two,
then $C$ induces either a $5$-path or a $6$-cycle,
being equivalent to either
$$\{110,010,000,001,101\}\quad \mbox{or}\ \{110,010,000,001,101,111\},$$
respectively.

(ii) Let $(C_+,C_-)$ be a heavy $S$-partition.
Without loss of generality, 
According to the definition of heavy $S$-partition,
we can assume without loss of generality that the $8$ vertices
ending by $9$ zeros include at least $5$ vertices from $C_+$.
From (i) we see that these vertices are 
$110\,000000000$, $010\,000000000$, $000\,000000000$, $001\,000000000$, $101\,000000000$,
and may be  $111\,000000000$. It remains to note that the first cell of the corresponding 
$(1,2)$-local partition contains the first $5$ of these vertices
and $3$ more neighbors of $100000000000$, which are $100100000000$, $100010000000$, and $100001000000$,
up to coordinate permutation.
\end{proof}

In contrast to the other subcases, in this section we choose 
the $(1,3)$-local partitions for the next step of the classification
instead of $(2,2)$-local partitions. 
Up to equivalence, we have the following number of partitions
containing 
$\bar e_1+\bar e_2$, 
$\bar e_2$, 
$\bar 0$, 
$\bar e_3$, 
$\bar e_3+\bar e_1$:
\begin{itemize}
 \item there are $178$ $(2,2)$-local partitions ($2$-local partitions);
 \item there are $953730$ (1 CPU minute) $(2,3)$-local partitions;
 \item there are $815364$ (30 CPU days) $(3,3)$-local partitions ($3$-local partitions);
 up to this step, all local partitions are square-free;
 \item there are $2325257827$ $(3,4)$-local partitions, $ 560234770 $ (14 CPU days) of them are square-free;
 \item there are $54$ $(4,4)$-local partitions ($4$-local partitions),
 all of them continue to $S$-partitions, $0$ (7 CPU  days) of them are square-free;
 \item there are $17$ heavy $S$-partitions, none of them is square-free (17 CPU days).
\end{itemize}
\begin{remark}
 In the considered case, the $r$-equivalence coincides with  the $(r,r)$-equivalence
 because the first coordinate is special and cannot be permuted with any other.
\end{remark}

Finally, we have established the following fact, 
which essentially simplifies the next-section classification 
of the square-free $S$-partitions.

\begin{lemma}\label{l:sf-lt}
All square-free $S$-partitions are light. 
\end{lemma}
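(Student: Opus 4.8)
The plan is to prove the contrapositive: every heavy $S$-partition is square (equivalently, not square-free). This is precisely the ``main result'' announced for the heavy-partition stage, so the argument reduces to combining the reduction lemma stated at the start of this subsection with the exhaustive enumeration reported above.

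First I would invoke the reduction lemma (every heavy $S$-partition is equivalent to a partition continuing the fixed $(1,2)$-local partition $(B_+,B_-)$). This collapses the a priori unbounded collection of heavy partitions onto the single reconstruction tree rooted at $(B_+,B_-)$, whose leaves are exactly the complete $S$-partitions enumerated in the itemized list of this section. The point is that it suffices to examine heaviness and squareness only among continuations of one explicit local partition.

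Next I would read off the endpoint of that enumeration: there are exactly $17$ heavy $S$-partitions continuing $(B_+,B_-)$, and the search verified that \emph{none} of them is square-free. To transfer this to \emph{all} heavy partitions, the key observation is that both relevant properties are invariants of equivalence: ``heavy'' and ``square'' are each defined by the existence of an induced $Q_3$ (respectively, an induced square) carrying a prescribed number of first-cell vertices, and the induced-subgraph structure is preserved by every automorphism of $Q_{12}$. Consequently the $17$ computed representatives exhaust all heavy $S$-partitions up to equivalence, and each of them is square. Combining the two steps, no heavy $S$-partition is square-free; contrapositively, every square-free $S$-partition is light.

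The genuine obstacle here is computational rather than logical. The assertion rests entirely on the completeness of the exhaustive reconstruction from $(B_+,B_-)$ up to the full $S$-partitions, and on the correctness of the ``none is square-free'' verdict for the $17$ leaves. What must be trusted is therefore the search itself (the multi-day computation tabulated above), together with the isomorph-rejection and the orbit-stabilizer double-counting validation described in Subsection~\ref{s:covering}; once these are accepted, the deduction of Lemma~\ref{l:sf-lt} is immediate.
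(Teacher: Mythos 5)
Your proposal matches the paper's own (implicit) argument exactly: the lemma is established by combining the reduction lemma for heavy partitions with the exhaustive enumeration of continuations of $(B_+,B_-)$, which found that none of the $17$ heavy $S$-partitions is square-free, with equivalence-invariance of ``heavy'' and ``square'' transferring the result to all heavy partitions. Your closing remark about what must be trusted also mirrors the paper's remark that validating the lemma only required enumerating the square-free continuations (which already die out at the $(4,4)$-local stage).
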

\begin{remark}
To validate Lemma~\ref{l:sf-lt}, it was sufficient to consider only square-free partitions.
The numbers of partitions without the square-free condition provided above
are based on early computations, which occur to be redundant. The CPU time was not kept;
it is essentially smaller than that for the step in the previous section,
and it is surely possible to save some time if consider only light square partitions in 
Section~\ref{s:sq}. In any case, the largest part of computation is described in the next section.
\end{remark}


\subsection{The square-free partitions}\label{s:ns}

\begin{lemma}
Every square-free $S$-partition is equivalent to a partition
containing 
$\bar e_1+\bar e_2$, 
$\bar e_2$, 
$\bar 0$, 
$\bar e_3$, and
$\bar e_3+\bar e_4$, 
in its first cell $C_+$.
\end{lemma}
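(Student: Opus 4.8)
The goal is to show that any square-free $S$-partition can be normalized, up to equivalence, so that its first cell $C_+$ contains the five specified vertices $\bar e_1+\bar e_2$, $\bar e_2$, $\bar 0$, $\bar e_3$, $\bar e_3+\bar e_4$. The plan is to start from the all-zero word, which we may assume lies in $C_+$ by the equivalence reductions already in force (every partition with $\bar 0\in C_+$ is the relevant normal form). Since the quotient entry $S_{++}=2$, the vertex $\bar 0$ has exactly two $C_+$-neighbors among the weight-one words; after a coordinate permutation fixing no special structure yet, I would name these two neighbors $\bar e_2$ and $\bar e_3$. This pins down three of the five required vertices.

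Next I would exploit the local condition at $\bar e_2$ and at $\bar e_3$. Each of these weight-one vertices is in $C_+$, so each again has exactly two neighbors in $C_+$; one such neighbor is $\bar 0$ itself, so each of $\bar e_2$, $\bar e_3$ needs exactly one further $C_+$-neighbor of weight two. For $\bar e_2$ this extra neighbor is some $\bar e_2+\bar e_j$; for $\bar e_3$ it is some $\bar e_3+\bar e_k$. The key point is where these indices may fall. If the two weight-two neighbors shared a common extra index with each other or with the $\{\bar 0,\bar e_2,\bar e_3\}$ configuration in a way that closed a $4$-cycle, we would produce a square in $C_+$, contradicting square-freeness. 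In particular, the vertices $\bar 0,\bar e_2,\bar e_3$ together with any common weight-two word $\bar e_2+\bar e_3$ would form a square; hence $\bar e_2+\bar e_3\notin C_+$, and the extra neighbor of $\bar e_2$ must use an index different from $3$ (and symmetrically for $\bar e_3$).

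The remaining work is to choose coordinate names so that the extra neighbor of $\bar e_2$ becomes $\bar e_1+\bar e_2$ and the extra neighbor of $\bar e_3$ becomes $\bar e_3+\bar e_4$, while checking that these two indices ($1$ for the $\bar e_2$-branch, $4$ for the $\bar e_3$-branch) can be chosen distinct and distinct from $\{2,3\}$. Here I would argue by the square-free hypothesis once more: if the two weight-two words were forced to share an index creating a common square through an intermediate weight-one or weight-two vertex, we again get a forbidden square. Since there are ten free coordinates available and only a bounded configuration to place, a permutation of coordinates fixing the already-named positions can send the chosen indices to $1$ and $4$ respectively. The main obstacle I expect is the careful case analysis ruling out every coincidence of indices that would create a square — one must verify that \emph{no} choice of the two extra neighbors, consistent with the local balance conditions (IV) and the cap (V), can avoid square-freeness except the generic one that yields the stated normal form; this is where the interplay between $S_{++}=2$ and the absence of $4$-cycles does the real work.
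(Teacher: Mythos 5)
Your proof follows the paper's normalization up to a point: placing $\bar 0$ in $C_+$, naming its two $C_+$-neighbors $\bar e_2$ and $\bar e_3$, excluding $\bar e_2+\bar e_3$ by square-freeness, and observing that each of $\bar e_2$, $\bar e_3$ needs exactly one more weight-two $C_+$-neighbor. The gap is in the last step, where you claim that square-freeness alone rules out every ``coincidence of indices.'' It does not. The problematic case is when the two extra neighbors share a coordinate, i.e.\ the second $C_+$-neighbor of $\bar e_2$ is $\bar e_1+\bar e_2$ and the second $C_+$-neighbor of $\bar e_3$ is $\bar e_1+\bar e_3$. These five vertices $\bar e_1+\bar e_2$, $\bar e_2$, $\bar 0$, $\bar e_3$, $\bar e_1+\bar e_3$ induce a path in $Q_{12}$, not a square: the only common neighbors of $\bar e_1+\bar e_2$ and $\bar e_1+\bar e_3$ are $\bar e_1$ (which is not in $C_+$, since $\bar 0$ already has its two $C_+$-neighbors) and $\bar e_1+\bar e_2+\bar e_3$ (which nothing forces into $C_+$). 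So this configuration is perfectly consistent with square-freeness and with conditions (IV) and (V); no local interplay between $S_{++}=2$ and the absence of $4$-cycles can exclude it.

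What actually kills this case in the paper is that the five vertices all lie in the $3$-subcube spanned by coordinates $1,2,3$, making the partition \emph{heavy}; the paper then invokes Lemma~\ref{l:sf-lt} (every square-free $S$-partition is light), which is not a local combinatorial fact but the outcome of the computational classification of heavy partitions in Section~\ref{s:nl} (all $17$ heavy $S$-partitions turn out to contain squares). Without citing that lemma, or reproducing an equivalent global argument, the shared-index case remains open and the normal form with distinct indices $1$ and $4$ cannot be forced. This is precisely the step your plan waves at but cannot complete by square-freeness alone.
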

\begin{proof}
 Without loss of generality, we can assume that $\bar 0$
 belongs to $C_+$, and its two neighbors in $C_+$ are 
 $\bar e_2$ and $\bar e_3$. The word 
 $\bar e_2 + \bar e_3$ is not in $C_0$ because 
 the partition is square-free.
 Hence, the second $C_+$-neighbor of $\bar e_2$
 is $\bar e_i + \bar e_2$, where $i\ne 2,3$. 
 We can assume $i=1$. 
 Similarly, 
 the second $C_+$-neighbor of $\bar e_2$
 is $\bar e_3 + \bar e_j$, where $j$ is not $2$ or $3$.
 Additionally, $j$ is not $1$, 
 because the partition
 is not light otherwise, 
 contradicting Lemma~\ref{l:sf-lt}.
 So, we can assume $j=4$.
\end{proof}

Based on the lemma above, we will search for
only local partitions having 
$\bar e_1+\bar e_2$, 
$\bar e_2$, 
$\bar 0$, 
$\bar e_3$, and
$\bar e_3+\bar e_4$, 
in the first cell.
Among such partitions,
\begin{itemize}
 \item there are $1786$ equivalence classes
 of
 $(2,2)$-local partitions, $1010$ equivalence classes
 of $2$-local partitions.
\end{itemize}

Realizing the strategy described in Section~\ref{s:reduce},
we can choose $1010$ representatives of 
$(2,2)$-local partitions, and for each of them find
all non-equivalent $(2,3)$-local, $(3,3)$-local, 
$(3,4)$-local, $(4,4)$-local, and finally $S$- partitions.
The local partitions that continue the chosen $1010$ representatives are called \emph{leading}. 
Up to equivalence, we have the following 
numbers of square-free partitions:

\begin{itemize}
 \item there are $226841305$ 
 (few CPU days) 
 square-free $(2,3)$-local partitions,
 $ 77868835 $ of them are light,
 $ 38039061 $ of them are leading;
 
 \item there are $ 166399852 $ 
 light square-free 
 $(3,3)$-local partitions,
 $ 96211116 $ (1500 CPU days) of them are leading; 
 there are $ 83210833 $ $3$-local partitions
 (again, comparing $ 96211116 $ 
 and $ 83210833 $, 
 we find unreasonable applying 
 the strategy of Section~\ref{s:reduce} for $r=3$);
 
 \item there are $23 752 571 733$ 
 light square-free $(3,4)$-local partitions
 $14937782031$ of them (2328 CPU days) 
 are leading;
 
  \item there are $923$  light square-free 
  $(4,4)$-local partitions,
 $663$ (1306 CPU days) of them are leading; 
 there are $490$  light square-free 
 $4$-local partitions,
 $312$ of them are completable to square-free 
 $S$-partitions (the other are also completable
 to $S$-partitions, but with squares);
 
 \item there are $26$ square-free $S$-partitions.
\end{itemize}

\section{Results and properties}\label{s:prop}

As we see in Sections~\ref{s:sq}--\ref{s:ns}, 
there are 
$103$ nonequivalent 
$[[2,10],[6,6]]$-partitions,
$77$ of them have squares in the first cell,
while the remaining $26$ are square-free.
In this section, we discuss properties of the found equitable partitions.

\subsection{Cycle length}\label{s:cycle}
The first cell of a $[[2,10],[6,6]]$-partition induce a disjoint union of cycles in $Q_n$.
These cycles can be of different lengths, the possible values are 
$4$, $8$, $10$, $12$, $16$, $18$, $20$, $24$, $28$, $30$, $32$, 
$36$, $40$, $44$, $48$, $52$, $60$, $88$, $120$.
The following partitions have only one cycle length:

\#1: {\boldmath$4^{384}$} (i.e., $384$ cycles of length $4$);

\#101, \#102, \#103: {\boldmath$8^{192}$};\qquad
\#91, \#93, \#94, \#95: {\boldmath$24^{64}$};

\#78, \#79, \#80, \#81, \#82, \#83, \#84, \#85, \#86, \#87, \#90, \#92: {\boldmath$48^{32}$}.\\
The other partitions without a square in the first cell:

\#88, \#89, \#96, \#97, \#99, \#100: {\boldmath$8^{64} 16^{64}$};\qquad
\#98: {\boldmath$8^{64} 32^{32}$}.\\
Only one partition has more than five different cycle lengths: 

\#60: {\boldmath$4^{32} 18^{8} 20^{32} 30^{8} 36^{4} 60^{4}$}.\\
Two partitions have cycles of the maximum length:

\#69, \#71: {\boldmath$4^{64} 40^{8} 120^{8}$}.\\
Note that the cycle formula does not reflect other parameters
such as the number of cycles that are not translations of each other, 
the number of different directions in a cycle, the diameter of a cycle;
these parameters can vary over partitions with the same cycle formula.

\subsection{Automorphisms}\label{s:aut}
The order of the automorphism group of a partition from the considered class
can possess the values
$8$ (\#64, \#65),  $16$, $32$, $64$, $128$, $160$ (\#69, \#71), $256$, $384$ (\#19), 
$512$, $640$ (\#73, \#74), $768$ (\#95), $1024$, $2048$, $2560$ (\#33), $3072$, $4096$, 
$8192$ (\#15, \#102), 
$12288$ (\#103),
$16384$ (\#8, \#12),
$24576$ (\#101),
$32768$ (\#3),
$983040$ (\#1).
The number of translational automorphisms (i.e., the number of \emph{periods}
$\bar x$: $C_+ + \bar x = C_+$) can be 
$4$, $8$, $16$, $32$ (\#2, \#12, \#13, \#15, \#28), 
$64$ (\#3, \#8, \#101), $128$ (\#1).
The only partitions that have odd-weight (with odd, $5$ or $7$, number of ones) periods 
are \#41, \#43, \#74.
The cell $C_+$ is partitioned into orbits
under the action of $\mathrm{Aut}(C_+)$; 
the following partitions have small number of the orbits:
\#1, \#101 ($1$ orbit), 
\#16, \#95, \#103 ($2$ orbits),
\#3, \#8, \#12, \#32, \#88, \#102 ($3$ orbits),
\#15, \#20, \#28, \#30, \#33, \#96, \#97 ($4$ orbits).
A notable partition is \#95: 
the first cell is divided into only two orbits,
while the induced cycles are of length $24$. 

\subsection{Good orthogonal arrays and Fourier coefficients}\label{s:F}

A multiset $C$ of tuples from $\{0,1\}^n$ is called a \emph{(binary) orthogonal array}
of strength~$t$, OA$(|C|,n,2,t)$, if every $(n-t)$-subcube of $Q_n$ has exactly
$|C|/2^t$ elements of~$C$, taking into account the multiplicities.
If there are no elements of multiplicity more than one (i.e., $C$ can be treated 
as an ordinary set), the orthogonal array is called \emph{simple}.
In other words, a simple OA$(M,n,2,t)$ is a subset of $\{0,1\}^n$ of size~$M$
whose characteristic function is correlation immune of order~$t$.
In particular, $[[2,10],[6,6]]$-partitions correspond to simple OA$(1536,12,2,7)$.
By the bound in~\cite{Khalyavin:2010.en} (see~\cite{FDF:CorrImmBound} for the simple case),
an array of cardinality less than $2^{n-1}$ has strength at most $2n/3-1$ (for $n=12$, at most~$7$). So, among orthogonal arrays of strength~$7$ and size less than $2048$,
there are no orthogonal arrays of strength~$8$. 
It is natural to ask which of them better approximate 
orthogonal arrays of strength~$8$.
We will say that $C$ is an \emph{almost orthogonal array} of strength $t+1$,
OA$(|C|,n,2,t+)$, if it is OA$(|C|,n,2,t)$ and every $(n-t-1)$-subcube of $Q_n$ has exactly
$|C|/2^{t+1}-1$, $|C|/2^{t+1}$, or $|C|/2^{t+1}+1$ elements of $C$. From~\cite{KroVor:2020},
we know that there are two simple OA$(1792,12,2,7)$ and both of them are 
OA$(1792,12,2,7+)$. 
From current computations, we find that there are $103$ simple
OA$(1536,12,2,7)$ and only two of them, \#81 and \#82 
(with $32$ cycles of length $48$), are OA$(1536,12,2,7+)$.
Also, from data in~\cite{KroVor:2020},
we find that among the $16$ OA$(1024,12,2,7)$, there is only one OA$(1024,12,2,7+)$, 
number $16$.

The \emph{Fourier decomposition}
of a real-valued function $f:\{0,1\}^n \to \mathbb{R}$
is the representation of~$f$ in the form
$$ f(\bar x) = \sum_{\bar y \in \{0,1\}^n} \hat f(\bar y) (-1)^{\bar y \cdot \bar x}, $$
where $\bar y \cdot \bar x$ is the \emph{inner product} defined
as $(y_1, ... ,y_n) \cdot (x_1, ... ,x_n) = y_1x_1+ ... + y_nx_n$, and
$\hat f(\bar y)$ are the \emph{Fourier coefficients}
defined uniquely for each $f$ (indeed, the functions 
$\chi_{\bar y}(\bar x) = (-1)^{\bar y \cdot \bar x}$,
$\bar y \in \{0,1\}^n$, form an orthogonal basis of the space of all real-valued
functions on $\{0,1\}^n$). The function $\hat f$ is called the 
\emph{Fourier transform} of $f$.
For an equitable $S$-partition $(C_+,C_-)$, 
it is convenient to consider the Fourier transform
$\hat f_{C_+,C_-}$ of 
$$f_{C_+,C_-}(\bar x)=
\begin{cases}
\phantom{-}S_{+-} & \mbox{if $\bar x \in C_+$} \\
-S_{-+} & \mbox{if $\bar x \in C_-$}
\end{cases}.
$$
It is known that 
\begin{itemize}
 \item [(a)]
 $\hat f_{C_+,C_-}(\bar y)=0$ unless 
$\bar y$ has exactly $(b+c)/2$ ones;
 \item [(b)]
 if $\bar y$ has exactly $(b+c)/2$ ones,
 then 
 $$
 \hat f_{C_+,C_-}(\bar y)=
 2^{(S_{+-}+S_{-+})/2-n}
 \sum_{\bar x \perp \bar y} 
         f_{C_+,C_-}(\bar x),
 $$
 where $\bar x \perp \bar y$ means 
 that $\bar x$ and $\bar y$ have no ones in the same position;
 \item [(c)]
 in particular, every Fourier coefficient 
 is a multiple of 
 $2^{(S_{+-}+S_{-+})/2-n}(S_{+-}+S_{-+})$;
 \item[(d)] 
 $\sum_{\bar y \in \{0,1\}^n} 
        (\hat f_{C_+,C_-}(\bar y))^2 = S_{+-}S_{-+}$.
\end{itemize}
It follows from~(c) and~(d) that
the Fourier transform of every 
$[[2,10],[6,6]]$-partition is integer and the sum of the squares of all coefficients is $60$. In fact, the possible values of coefficients
are $0$, $\pm 1$, $\pm 2$. It can be seen from (b)
that the partitions with all Fourier coefficients in 
$\{0,1,-1\}$ ($60$ non-zero coefficients in total) 
correspond to OA$(1536,12,2,7+)$ 
(equivalence classes \#81 and \#82);
i.e.,
for these partitions,
every $4$-subcube has $11$, $12$, or $13$
elements of the first cell.
There are four inequivalent partitions
with the Fourier coefficients in $\{0,2,-2\}$
($15$ non-zero coefficients), \#1, \#3, \#8, and \#101;
for these partitions, every $4$-subcube has $10$, $12$, or $14$
elements of the first cell.
For the partitions from the other $98$ equivalence classes,
each value from $10$ to $14$ is realized in some $4$-subcube.

\subsection{Known constructions}\label{s:known}
The partitions
\#1, \#3, \#15, and \#103 
(cycle formulas $4^{384}$, 
$4^{256}8^{64}$, $4^{128}8^{128}$, 
$8^{192}$, respectively) are
obtained from the unique 
$[[1,5],[3,3]]$-partition $(P_+,P_-)$ 
by the constructions
\begin{equation}\label{eq:*2}
C_i = \{(\bar x, \bar y ) \mid \bar x + \bar y \in P_i\},
\end{equation}
where the addition is coordinatewise over $\mathbb{Z}_2$ 
for \#1, by pairs of coordinates  over $\mathbb{Z}_4$ for \#103
(binary pairs are treated as elements of $\mathbb{Z}_4$, via the map $00\to 0$, $01\to 1$, $11\to 2$, $10\to 3$),
and of mixed $\mathbb{Z}_2\mathbb{Z}_4$ type for \#3 and \#15
(one or two pairs of coordinates are treated as elements of $\mathbb{Z}_4$, while the remaining $4$ or $2$ coordinates are treated as elements of $\mathbb{Z}_2$).

The next construction is described here without some technical details, which can be found in~\cite{FDF:PerfCol} and~\cite{VorFDF}. 
$[[2,10],[6,6]]$-partitions 
can be constructed in the following way.
At first, we take a 
$[[2,1],[3,0]]$-partition
$(A_+,A_-)$, say $A_-=\{000,111\}$; note that $A_+$
can be partitioned into $3$ edges of the $3$-cube.
Defining
$ B_+ = \{(\bar x, \bar y ) \mid \bar x + \bar y \in A_+\}$
(with the coordinatewise binary addition), we get 
a $[[4,2],[6,0]]$-partition $(B_+,B_-)$,
where $B_+$ can be partitioned into twelve quadruples $Q_1$, \ldots, $Q_{12}$ inducing $2$-subcubes 
of the $6$-cube.
The next step is a special case of the construction
in~\cite[Sect.~3]{FDF:PerfCol},
with the additional possibility to switch some subsets
observed in~\cite{VorFDF}.
Applying the doubling construction again
results in a $[[8,4],[12,0]]$-partition $(C_+,C_-)$,
where $C_+$ can be split into $C'_+$ and $C''_+$
to form a $[[2,6,4],[6,2,4],[6,6,0]$-partition $(C'_+,C''_+,C_-)$.
There are many ways to split:
each set 
$\{(\bar x, \bar y ) \mid \bar x + \bar y \in Q_i\}$,
is divided into two subsets $Q'_i$ and $Q''_i$ according 
to the formula in~\cite{FDF:PerfCol}, 
but, as noted in~\cite{VorFDF}, for each $i$
we are free to choose which of $Q'_i$, $Q''_i$ is included 
in $C'_+$ and which in $C''_+$. So, we can get 
$2^{12}$ different $[[2,6,4],[6,2,4],[6,6,0]$-partitions $(C'_+,C''_+,C_-)$.
It is straightforward from the quotient matrix that for each of them, 
$(C'_+,C''_+ \cup C_-)$ is a $[[2,10],[6,6]]$-partition.
In such a way, we get $8$ equivalence classes of $[[2,10],[6,6]]$-partitions,
\#1, \#2, \#3, \#5, \#7, \#8, \#16, and \#101. 
With some accuracy, this switching approach 
can be combined with varying the addition 
as in the previous paragraph, 
but this possibility was not yet developed.
 
For \#1, \#73 (cycle formula $4^{64} 20^{16} 60^{16}$), 
\#74 ($4^{64} 10^{16} 20^8 30^{16} 60^8$), 
the first cell of a partition
can be obtained as the projection (puncturing) of the first cell of the unique 
$[[0,13],[3,10]]$-partition (the construction is given in \cite{FDF:PerfCol}, the uniqueness is established in~\cite{Kro:OA13}) in one of the directions (one direction corresponds to \#1, six directions to \#73, and six to \#74).

\section{Related structures}\label{s:related}
\subsection
[Splitting into equitable 3-partitions]
{Splitting into equitable $3$-partitions}
\label{s:split}
Here, we discuss pairs of disjoint orthogonal arrays
OA$(1536,12,2,7)$. The complement of the union of two
disjoint OA(1536,12,2,7) is necessarily an OA$(1024,12,2,7)$,
and these three arrays necessarily form an equitable partition
with quotient matrix
$[[2,6,4],[6,2,4],[6,6,0]]$.
The easiest way to classify such partitions is, starting
from OA$(1536,12,2,7)$, to find all OA$(1024,$ $12,$ $2,$ $7)$ disjoint
to it. 
This can be done with the same approach as
the classification of OA$(1024,12,2,7)$ in~\cite{KroVor:2020}.
It can be considered as a simplified version of the
computational approach considered in the present paper,
and we do not discuss the details here;
the only thing we note is that the number of solutions 
at each step is relatively small, and isomorph rejection
is not necessary until the final step.
As a result, we find that for only~$36$ 
of~$103$ inequivalent OA$(1536,12,2,7)$
the complement can be split
into  OA$(1536,12,2,7)$ and OA$(1024,12,2,7)$.
For~$6$ of~$103$ inequivalent OA$(1536,12,2,7)$
(No 1, 3, 8, 15, 101, 103),
the complement can be split in~$5$ ways,
(No 2, 4--7, 9--14, 16--21, 27--30, 32, 88--89, 96--100, 102)
for the rest~$67$ OA$(1536,12,2,7)$, the complement
in unsplittable.
Totally, there are $40$ inequivalent pairs 
of disjoint OA$(1536,12,2,7)$ (essentially, 
equitable $[[2,6,4],[6,2,4],[6,6,0]]$-partitions).
In $38$ of them, the two OA(1536,12,2,7) are equivalent to each other; for $2$ pairs, they are not equivalent.
Only $5$ (No 1, 2, 3, 15, and 16, according to~\cite{KroVor:2020}) of the $16$ inequivalent OA(1024,12,2,7) can occur as the complement of two disjoint OA(1536,12,2,7).
See more details in the appendix.

\subsection[Non-simple OA(1536,12,2,7)]
{Non-simple \boldmath OA$(1536,12,2,7)$}\label{s:n-s}
In this section, we discuss the existence on non-simple 
orthogonal arrays with parameters OA$(1536,12,2,7)$.
One such array can be constructed by~\eqref{eq:*2}
from the following non-simple OA$(24,6,2,3)$:
the $20$ weight-$3$ words of length~$6$
are taken with multiplicity~$1$
and two words $000000$ and $111111$, with multiplicity~$2$.
It is not difficult to observe that different meanings
of~``$+$'' in~\eqref{eq:*2} result in equivalent arrays.
A nice property of this OA$(24,6,2,3)$ and the corresponding OA$(1536,12,2,7)$
is that they are also related to equitable partitions:
the non-simple OA$(24,6,2,3)$ and the corresponding OA$(1536,12,2,7)$ 
are obtained from equitable partitions
$(C_0,C_1,C_2,C_3)$ of~$Q_{6}$ and~$Q_{12}$ with quotient matrices
\begin{equation}\label{eq:SS}
 \begin{pmatrix}
0 & 6 & 0 & 0 \\
1 & 0 & 5 & 0 \\
0 & 2 & 0 & 4 \\
0 & 0 & 6 & 0
\end{pmatrix}
\qquad \mbox{and}\qquad
\begin{pmatrix}
0 & 12 & 0 & 0 \\
2 & 0 & 10 & 0 \\
0 & 4 & 0 & 8 \\
0 & 0 & 12 & 0
\end{pmatrix},
\end{equation}
respectively, by taking~$C_0$ with multiplicity~$2$
and~$C_3$ with multiplicity~$1$.
The next theorem shows that we cannot
construct inequivalent non-simple OA$(1536,12,2,7)$
related to such equitable partitions.
\begin{theorem}
 Up to equivalence, there is only one equitable partition of~$Q_{12}$
 with quotient matrix~\eqref{eq:SS}.
\end{theorem}
\begin{proof}
 Without loss of generality, we assume that the all-zero word~$\bar0$
 is in~$C_0$. It follows that all weight-$1$ words are in~$C_1$.
 Since every vertex of~$C_1$ has exactly~$2$ neighbors in~$C_0$,
 we see that there are~$6$ weight-$2$ words in~$C_0$
 and no two of them have~$1$ in a common position.
 W.l.o.g., they are of form $(x,x)$, where~$x$ is a weight-$1$ word of length~$6$.
 They have $10\cdot 6 = 60$ neighbors of weight~$3$ in total, 
 which are necessarily in~$C_1$.
 The $60$ other weight-$2$ words (that are not of form $(x,x)$) 
 are in~$C_2$.
 Each of them has exactly~$8$ neighbors in~$C_3$, which 
 are necessarily of weight-$3$, while every weight-$3$ word in~$C_3$
 has exactly~$3$ weight-$2$ neighbors in~$C_2$.
 We find that there are exactly $60\cdot 8/3 = 160$
 weight-$3$ words in~$C_3$. Since $\binom{12}{3}=60+160$,
 we see that all the weight-$3$ words that are not neighbors
 of weight-$2$ words of~$C_0$ (that is, the weight-$3$ words of form $(x,y)$
 where~$x$ and~$y$ have no $1$ in a common position) are in~$C_3$.
 Now, consider the words of weight~$4$ in~$C_0$. They can only be of form
 $(x,x)$, because every other weight-$4$ word has neighbors in~$C_3$ (of weight~$3$). On the other hand, by counting the number 
 of $C_0$-neighbors for the $C_1$-neighbors of weight~$3$,
 we conclude that all words of form $(x,x)$ and weight~$4$
 (as well as weight~$0$ and~$2$) are in~$C_0$.
 The rest of the proof consists of simple numbered claims.

 Claim~1: \emph{if $x$, $y$, and $z$ belong to $C_0$ and $y$, $z$ 
 are at distance~$2$ from~$x$, then $x+y+z \in C_0$}. Proof:
 from the consideration above, we see that (*) holds for $x=\bar 0$.
 Similarly, it holds for every other~$x$ from~$C_0$.
 
 Denote by $C^{(i)}$ the set of all length-$12$ binary words of form $(x,y)$ such that $x+y$ has weight~$i$.
 
 Claim~2: \emph{$C^{(0)}$ is a subset of $C_0$}. Proof: straightforwardly
 from Claim~1. 
 
 Claim~3: \emph{$C^{(i)}$ has exactly~$2i$ neighbors in~$C^{(i-1)}$
 and $12-2i$ neighbors in~$C^{(i+1)}$, $i=0,\ldots,6$}. Proof: straightforwardly
 from the definition.
 
 Claim~4: \emph{$C^{(1)} \subset C_1$, $C^{(2)} \subset C_2$, $C^{(3)} \subset C_3$, 
 $C^{(4)} \subset C_2$, $C^{(5)} \subset C_1$, $C^{(6)} \subset C_0$}. 
 Proof: straightforwardly  from Claim~2, Claim~3, and the quotient matrix.
 
 Claim~5: $C_0=C^{(0)} \cup C^{(6)}$, $C_1=C^{(1)} \cup C^{(5)}$, $C_2=C^{(2)} \cup C^{(4)}$, $C_3=C^{(3)}$.
 
 Finally, we see that the partition is reconstructed up to equivalence.
\end{proof}

\begin{problem}
 Are there inequivalent non-simple OA$(1536,12,2,7)$? 
 Is there a non-simple OA$(1792,12,2,7)$? 
 Are there orthogonal arrays OA$(M,12,2,7)$ with
 $M<2048$, $M\not\in\{1024,1536,1792\}$ (such arrays cannot be simple)?
\end{problem}

\appendix
\section{List of partitions}
Below, we list representatives of all $103$ equivalence classes 
of equitable partitions with quotient matrix $[[2,10],[6,6]]$,
which correspond to simple OA$(1536,12,2,7)$,
and $40$ classes of equitable partitions with quotient matrix 
$[[2,6,4],[6,2,4],[6,6,0]]$ 
(partitions into two OA$(1536,12,2,7)$
and one OA$(1024,12,2,7)$).

We first describe the list of equitable $[[2,10],[6,6]]$-partitions.
It would take unreasonably much space
to list each partition completely. We use some easy-to-recover form,
representing each partition $(C_+,C_-)$ by the characteristic function $\chi_{C_+}$ of the first cell,
restricted by the weight-$4$ words only.
The values of this function on the words of weight $3$ 
(similarly, weight $2$, weight $1$, and $0$) can be easily reconstructed by the following rule:
if a weight-$3$ word has at most $2$ weight-$4$ neighbors in $C_+$, then it belongs to $C_+$,
otherwise it is in $C_-$. The values on each word $x$ of weight $w=5$ (then, $6$, $7$, \ldots, $12$) 
can be found using the correlation immunity of $\chi_{C_+}$: 
the number of ones in the set $\{y \mid y\preceq x\}$ is $\frac {6}{10+6}\cdot 2^w$.
The list of values on the $495$ weight-$4$ words, listed in the lexicographic 
ordering $000000001111$, $000000010111$, \ldots, $111100000000$, 
is represented in the hexadecimal form, each symbol corresponding to $4$ binary
values, except the first symbol of the sequence, which corresponds to $3$ binary values 
(note that the first value can be $0$, representatives No 75, 76, 77);
each list is given in two lines following the class number with period (1. \ldots, 2. \ldots, 103. \ldots).

The representatives are lexicographically ordered 
in the following manner:
for each partition, we order the first cell lexicographically,
forming a tuple from $1536$ words;
then, such lists are compared in the lexicographical way.
Each equivalence class is represented by the lexicographically
first partition, and all $103$ representatives of different classes
are ordered as well. In particular, the first $77$ contain
$0...0$, $0...01$,  $0...010$,  $0...011$, 
forming a square, and the last $26$, square-free,
contain
$0...0$, $0...01$,  $0...010$,  $0...0101$, $0...01010$.
Note that the lexicographic order between functions on 
the whole $12$-cube is not kept after restricting 
by the weight-$4$ words only;
that is why the list of sequences below does not look ordered.

If in an equitable $[[2,10],[6,6]]$-partition $(C_+,C_-)$
the cell $C_-$ is splittable into cells $C_*$, $C_\bullet$ of 
an equitable $[[2,6,4],[6,2,4],[6,6,0]]$-partition,
then we also represent the smallest cell $C_\bullet$
over all inequivalent ways to split.
The algorithm to reconstruct all vertices of~$C_\bullet$ from 
the weight-$4$ vertices, which are listed, is similar 
to the one for~$C_+$:
if a weight-$3$ (similarly, for weight-$2$ and weight-$1$) 
word has no weight-$4$ neighbors in $C_\bullet$,
then 
then it belongs to~$C_\bullet$,
otherwise it is in~$C_+$ or~$C_*$.
For words of weight $w=5$ (then, $6$, $7$, \ldots, $12$),
the words of~$C_\bullet$ are determined from the orthogonal array property.
Each list 
representing~$C_\bullet$
is given in two lines following the class number
(of equitable $[[2,6,4],[6,2,4],[6,6,0]]$-partitions)
in parentheses.
The numbers in parentheses can repeat because 
inequivalent equitable $[[2,10],[6,6]]$-partition
(representatives No 8 and No 15, No 101 and No 103) 
can be split into equivalent $[[2,6,4],[6,2,4],[6,6,0]]$-partitions. So, there are $40$ inequivalent equitable $[[2,6,4],[6,2,4],[6,6,0]]$-partitions, while our list contains $42$ representatives.
The number in the brackets in the right of the list indicates 
the equivalence class of the equitable $[[0,12],[4,8]]$-partition
$(C_\bullet,C_+ \cup C_*)$, according to the classification in~\cite{KroVor:2020};
the following number is the number of ways a code equivalent
to~$C_\bullet$ can be embedded in the complement of~$C_+$.

\small
\begin{verbatim}
  1. 7321c79e3c79e384ce1ffe00000000061ffe0000000007099cc3200001f807
     f00810e670c30e670c20600001f807f00810e670c30e670c207099cc321819
 (1) 00003861c3861c000000000f199e330000000f199e330000033cc70c380000
     00066f198000f1980019870c38000000066f198000f1980019800033cc0000 [1]:5
  2. 7321c79e3c79e384ce1ffe00000000061ffe00000000070aad45200001f807
     f00810d568a30e670c20600001f807f00810e670c30d568a2070aad4521819
 (2) 000000000000000001e001e07f80ff79e001e07f80ff780000001e001e07f8
     0ff7800000000000001e1e001e07f80ff7800000000000001e00000001e1e0 [1]:1
  3. 7321c79e3c79e384ce1ffe00000000061ffe00000000070e670c200001f807
     f0081099cc330e670c20600001f807f00810e670c3099cc32070e670c21819
 (3) 00003861c3861c000000000f199e330000000f199e3300f19800070c380000
     0006600033ccf1980019870c38000000066f19800000033cd98f1980000000 [3]:4
 (4) 000000000000000001e001e07f80ff79e001e07f80ff780000001e001e07f8
     0ff7800000000000001e1e001e07f80ff7800000000000001e00000001e1e0 [1]:1
  4. 7321c79e3c79e384ce1ffe00000000061ffe00000000070e670c200001f807
     f00810aad4530d568a20600001f807f00810d568a30aad452070e670c21819
 (5) 000000000000000001e001e07f80ff79e001e07f80ff780000001e001e07f8
     0ff7800000000000001e1e001e07f80ff7800000000000001e00000001e1e0 [1]:1
  5. 7321c79e3c79e384ce1ffe00000000061f801f800000830aad452007e00007
     f00050d568a30e670c20600001f807f00810e670c30d568a20b0aad4521419
 (6) 000000000000000001e001e07f80ff79e001e07f80ff780000001e001e07f8
     0ff7800000000000001e1e001e07f80ff7800000000000001e00000001e1e0 [1]:1
  6. 7321c79e3c79e384ce1ffe00000000061f801f800000830aad452007e00007
     f00050e670c30d568a20600001f807f00810d568a30e670c20b0aad4521419
 (7) 000000000000000001e001e07f80ff79e001e07f80ff780000001e001e07f8
     0ff7800000000000001e1e001e07f80ff7800000000000001e00000001e1e0 [1]:1
  7. 7321c79e3c79e384ce1f801f800000821f801f800000830aad452007e00007
     f00050d568a30e670c20a007e00007f00050e670c30d568a20b0aad4521415
 (8) 000000000000000001e001e07f80ff79e001e07f80ff780000001e001e07f8
     0ff7800000000000001e1e001e07f80ff7800000000000001e00000001e1e0 [1]:1
  8. 7321c79e3c79e384ce1f801f800000821f801f800000830e670c2007e00007
     f0005099cc330e670c20a007e00007f00050e670c3099cc320b0e670c21415
 (9) 00c00861c386100300003c00199e3300003c00199e3300f180c00708190800
     000e200603ccf180c0194708190800000e2f180c0000603cd94f180c000000 [15]:4
(10) 000000000000000001e001e07f80ff79e001e07f80ff780000001e001e07f8
     0ff7800000000000001e1e001e07f80ff7800000000000001e00000001e1e0 [1]:1
  9. 7321c79e3c79e384ce1f801f800000821f801f800000830e670c2007e00007
     f00050aad4530d568a20a007e00007f00050d568a30aad4520b0e670c21415
(11) 000000000000000001e001e07f80ff79e001e07f80ff780000001e001e07f8
     0ff7800000000000001e1e001e07f80ff7800000000000001e00000001e1e0 [1]:1
 10. 7321c79e3c79e384ce1f801f800000821f8000007f00030aad452007e1f800
     000850d568a30e670c20a007e00007f00050e670c30d568a20d0aad4521215
(12) 000000000000000001e001e07f80ff79e001e07f80ff780000001e001e07f8
     0ff7800000000000001e1e001e07f80ff7800000000000001e00000001e1e0 [1]:1
 11. 7321c79e3c79e384ce1f801f800000821f8000007f00030aad452007e1f800
     000850e670c30d568a20a007e00007f00050d568a30e670c20d0aad4521215
(13) 000000000000000001e001e07f80ff79e001e07f80ff780000001e001e07f8
     0ff7800000000000001e1e001e07f80ff7800000000000001e00000001e1e0 [1]:1
 12. 7321c79e3c79e384ce1f801f800000821f8000007f00030e670c2007e1f800
     000850e670c3099cc320a007e00007f0005099cc330e670c20d0e670c21215
(14) 000000000000000001e001e07f80ff79e001e07f80ff780000001e001e07f8
     0ff7800000000000001e1e001e07f80ff7800000000000001e00000001e1e0 [1]:1
 13. 7321c79e3c79e3fc001f801099c000821f801f800000e10e641c2007e00006
     13385f800c330e6700782007e00007f00610e642c3099cc300f0e6700b8416
(15) 0000000000000003fde001e00000ff79e001e07f80ff000003c01e001e07f8
     0007807f800000000f001e001e07f80ff000003c000000001e000000f001e0 [3]:1
 14. 7321c79e3c79e3fc001f801099c000821f8000007f00610e641c2007e1f800
     000e10e642c3099cc300e007e0000613385f800c330e67007850e6700b8216
(16) 0000000000000003fde001e00000ff79e001e07f80ff000003c01e001e07f8
     0ff000003c000000001e1e001e07f80007807f800000000f00000000f001e0 [3]:1
 15. 73218f9f18f9f184ce8ffc400000000a8ffc400000000b38670c200001ce07
     9c081099ca2b38670c20600001ce079c08138670c3099ca2a0738670c22829
 (9) 00007060e7060e000000000c7998f30000000c7998f300c798000e0c1c0000
     0006a00035d4c798001a8e0c1c00000006ac79800000035d5a8c7980000000 [15]:2
(17) 0c1840602406021830600190806100e0600190806100e0c600a280f3c00198
     033003060514c600a28000f3c0019803300c600a28306051400c600a298786 [2]:1
(18) 0000000000000000017003a31f863f757003a31f863f7400000017003a31f8
     63f7400000000000001d17003a31f863f7400000000000001d00000001d1d0 [3]:2
 16. 7312cb5e3c7ad348ce1ffe00000000061f801f800000830aad452007e00007
     f00050d568a30e670c20600001f807f00810e670c30d568a20b0aad4521419
(19) 000000000000000001e001e07f80ff79e001e07f80ff780000001e001e07f8
     0ff7800000000000001e1e001e07f80ff7800000000000001e00000001e1e0 [1]:1
 17. 7312cb5e3c7ad348ce1ffe00000000061f801f800000830aad452007e00007
     f00050e670c30d568a20600001f807f00810d568a30e670c20b0aad4521419
(20) 000000000000000001e001e07f80ff79e001e07f80ff780000001e001e07f8
     0ff7800000000000001e1e001e07f80ff7800000000000001e00000001e1e0 [1]:1
 18. 7312cb5e3c7ad348ce1ffe00000000061f801f800000830b35862007e00007
     f00050e670c30cce4920600001f807f00810cce4930e670c20b0b358621419
(21) 000000000000000001e001e07f80ff79e001e07f80ff780000001e001e07f8
     0ff7800000000000001e1e001e07f80ff7800000000000001e00000001e1e0 [1]:1
 19. 7312cb5e3c7ad348ce1ffe00000000061f8000007f00030b35862007e1f800
     000850e670c30cce4920600001f807f00810cce4930e670c20d0b358621219
(22) 000000000000000001e001e07f80ff79e001e07f80ff780000001e001e07f8
     0ff7800000000000001e1e001e07f80ff7800000000000001e00000001e1e0 [1]:1
 20. 7312cb5e3c7ad348ce1f8000007f00021f8000007f00030e670c2007e1f800
     000850aad4530d568a20c007e1f800000850d568a30aad4520d0e670c21213
(23) 000000000000000001e001e07f80ff79e001e07f80ff780000001e001e07f8
     0ff7800000000000001e1e001e07f80ff7800000000000001e00000001e1e0 [1]:1
 21. 7312cb5e3c7ad348ce1f8000007f00021f8000007f00030e670c2007e1f800
     000850b358630cce4920c007e1f800000850cce4930b358620d0e670c21213
(24) 000000000000000001e001e07f80ff79e001e07f80ff780000001e001e07f8
     0ff7800000000000001e1e001e07f80ff7800000000000001e00000001e1e0 [1]:1
 22. 7301c79e3c79e3fc021fc00099c000c01f809f802000410e45146107e00004
     133a1f800c330c6700f82007f10003f00210e243c2099cc330b0a6628b8494
 23. 7301c79e3c79e3000205d49a987530041fc01f800000e10e26086107e00007
     f00610a6608a099cc328c1a2b1581ab03820000c330e451438b0c652439512
 24. 7301c79e3c79e3000205d49a987530041fc000007f00610e26086107e1f800
     000e10a6608a099cc328c1a2b1581ab03820000c330e451438d0c652439312
 25. 7301c79e3c79e300021fc01f800000e01f811099ff00030e241c2107e00007
     f0060099cc330a642c30c007e9f802133850e470060000c338b0c6700a9394
 26. 7301c79e3c79e300021fc01f800000e01f811f802133830e27006107e00007
     f0060099cc330a6700b0a007e9099ff00050e441c20000c338b0c642c29594
 27. 733fc79e0c79e0fc0010fe00180030641f81909980000308643ca00420001f
     f0361f99c0010e043c78400019f80213381086720ff800c33830e0710f9188
(25) 00000000f0000f03fde001e007800f01e000000000ff780783c01e001e0780
     0f0000003cf0007bc0001e000007f80007807800f007f80000000780f00000 [2]:1
 28. 733fc79e0c79e0fc0010801f980030e01f81909980000308643ca00420001f
     f0361f99c0010e043c788007f8000213305086720ff800c33830e0710f9184
(26) 00000000f0000f03fde001e007800f01e000000000ff780783c01e001e0780
     0f0000003cf0007bc0001e000007f80007807800f007f80000000780f00000 [2]:1
 29. 733fc79e0c79e0fc0010801f980030e01f818000213303086720e00420001f
     f0361f99c0010e0710f88007f909980000508643cbf800c33830e043c79184
(27) 00000000f0000f03fde001e007800f01e000007f80007807800f1e001e0780
     0f0000003cf000780f001e000000000ff780783c0007f800000007bc000000 [2]:1
 30. 733fc79e0c79e0000010fe1f980030641f818000213307086700e00421f81b
     f03e0019c0010e071079200019099ff000308642cbf800c33850e043c39988
(28) 00000000f0000f03fde001e007800f01e000007f80007807800f1e001e0780
     0f0000003cf000780f001e000000000ff780783c0007f800000007bc000000 [2]:1
 31. 733fc79e0c79e0000010801f980030e41f819581b53003086518a00420001f
     f0363f99c0010e0518b88007f9a982b030508662460000c33930e06247918c
 32. 733fc79e0c79e0000010801f980030e41f819f80213303086700e00420001f
     f0363f99c0010e0700f88007f9099bf000508643c20000c33930e043c3918c
(29) 00000000f0000f03fde001e007800f01e000007f80007807800f1e001e0780
     0f0000003cf000780f001e000000000ff780783c0007f800000007bc000000 [2]:1
 33. 733fc79e0c79e0000000c01f9840306400c000183f30e5f99c00015179a81b
     503030a2608f0a26383881a2b9581ab03030c4504f0c45343880000c339592
 34. 733fc79e0c79e0000000c01f9840306400c000183f30e5f99c000152b9581b
     503030a4508f0a45383881a179a81ab03030c2604f0c26343880000c339592
 35. 733fc79e0c79e0000000c01f9840306405e995983530050a4508e10020001b
     f03e3f99c0010a45383881a179a81ab03030c2604e0000c338d0c263439192
 36. 733fc79e0c79e0000000c01f980030e400c000187f3065f99c000152b9581b
     503030a452870a4518b881a179a81ab03030c2614b0c26247880000c339394
 37. 733fc79e0c79e0000000c01f980030e405e995983530050a4528610020001f
     f0363f99c0010a4518b881a179a81ab03030c2614a0000c338b0c262479194
 38. 733fc79e0c79e0000000c01f980030e41f819581b530030c2518a10020001f
     f0363f99c0010a4518b88007f9a982b03050a462460000c338b0c262479194
 39. 733fc79e08f98cac0c8f8c1a9800000210801f980110a301662ca00420001f
     9c2a3380a2c7f99c00188007f80003530095818a33086728a8b0e0718a8d90
 40. 733fc79e08f98cac0c8f8c000035300210801f980110a3016728a00420001f
     9c2a3380b18bf99c00188007f9a980000095818a3308662ca8b0e062c68d90
 41. 733f8f98c863ec84cc833e1f800000088f8c00003f0009099892a108000887
     92c8738830cb01672c21c00421ce141018706170c7380b1c21bf99c0019188
 42. 733f8f98c863ec84cc833e00003f00088f8c1f80000009099892a108000887
     92c8738832c301670ca1c00421ce141018706171c3380b0c61bf99c0019188
 43. 733f8f98c863ecac0c833e1a980000088f8c0000353009581892a108000887
     92ca338822cb016728a8c00421ce14101a306162c7380b18a8bf99c0019188
 44. 733f8f98c863ecac0c833e00003530088f8c1a98000009581892a108000887
     92ca3388328b01662ca8c00421ce14101a3061718b380a2c68bf99c0019188
 45. 733f8f98c863ecfc00833e00002133088f8c1099800009f80092a108000887
     92ce1388320f01643cb8400421ce14101e1061710f38083c783f99c0019188
 46. 733f8f98c861f084cc837d9f800000088f8c00003f000b099882a108000887
     9200738e70cb01672c21c00421ce160c18608170c7380b1c01ff99c0c80188
 47. 733f8f98c861f084cc837d80003f00088f8c1f8000000b099882a108000887
     9200738e72c301670ca1c00421ce160c18608171c3380b0c41ff99c0c80188
 48. 733f8f98c861f0ac0c837d9a980000088f8c000035300b581882a108000887
     9202338e62cb016728a8c00421ce160c1a208162c7380b1888ff99c0c80188
 49. 733f8f98c861f0ac0c837d80003530088f8c1a9800000b581882a108000887
     9202338e728b01662ca8c00421ce160c1a2081718b380a2c48ff99c0c80188
 50. 733f8f98c861f0fc00837d80002133088f8c109980000bf80082a108000887
     9206138e720f01643cb8400421ce160c1e0081710f38083c587f99c0c80188
 51. 733ec198c8f9807c008f8c1f00213318108e3f88411001016700e007f90896
     0cc0e0800a07c900ac9920cc2400179c203380b10d010f2cd89380c5c18784
 52. 733ec198c863e07c00833e1f00213318108e2008792c05388320c1f8190896
     0cc0e080090bc904ac1860cc25f81410101061700f010f1cd9138085c58b84
 53. 733e8fd8c863e00300830190897f0c0e833e000021330df800922108e3c807
     92c640184a01388300f940cc25f8181010138045c5061710f83010f0cd8d90
 54. 733e8fd8c861f00000837c006021330c830190897f000bf860b22108e3c807
     9206138e320e0184aca000cc25f81a0c1e2081000f38045c18b010f0c9998c
 55. 72388fb8c863fcfc3083c20ce14001808301800621318bf8141248f8e10988
     000093008bc7f80092b8210b24088392061309c2c9388720b8301643cb8984
 56. 72388fb8c863fc003083c20ce17f018083019f8621318a00141248f8e10988
     0000b3008bc3f80082b8a10b24088392065309c2c1388700b9301643cb8984
 57. 72388fb8c8600cfc3083c20ce1400182837f800621318bf8140248f8e1098b
     f00093008bc4000092b8610b240883920e1309c2c9388320b8301643c38994
 58. 72388fbfc863fce03083c20060c001e283018c80213191f8140268f8010988
     70009f810b03306091f8810b24088392061309c2c9388720b8301643cb8984
 59. 62118e3f18e29103008afe34d000ca8094f0438611228f356001c83803c667
     f006110a4a46380401292950088a0ffc08c3800a0330645438528549a02d43
 60. 62118e3f18e29103008a8034d03fca8094f0438611228f356401483fe3c664
     0006110a4a46380001a92950088a0ffc08c3800b0130645438528548a22d43
 61. 62118e3f18e29103008a8034d03fca8094f0438611228f356401483803c667
     f006110a4b44380401292957e88a0c0c08c3800a03306054b8528548a22d43
 62. 62118e3f18a78100308afe34d000ac809c5243e011228f365001c83803c667
     f006110a4a4a38040128a950088a0ff0c8a3800a0336046438528549a02d43
 63. 62118e3f18a78100308a8034d03fac809c5243e011228f365401483fe3c664
     0006110a4a4a380001a8a950088a0ff0c8a3800b0136046438528548a22d43
 64. 62118e2918fc0164009f80956071998094f0489147000b3061192807e2ae56
     844923800025356464b82807e336038cce0291522438646024d180242a3601
 65. 62118e2918fc0164009f80336038cce094f0489147000a3864502807e95607
     19980291522530612924c807e2ae56844931802427356464a0038000283783
 66. 62118e2918fd9100009faa20656842a294f0489147000b356074a804c3fe03
     8cce1184802638646020080549fe07190c838000233065093822924a283a4d
 67. 62118e2918fd9100009fcc206038cce094f0489147000a3860702802a3fe56
     842a31848027356464a0080549fe07190c82924a2330650924c38000283b83
 68. 62018e18386193800095fe20a01530449fea25e1a0c8a3011538a80d49ac07
     f008932662c23000cc2049a411fe66b048a306003238400440a0c24a22787a
 69. 62388e38c8e3fcff30908e2c901920008380006621110f30e422c83b390894
     000093084acbf800b23828cc2438080c181310f309f81012d8930145c78582
 70. 62388e38c8e3fcff30908e2c901920008381800621118b308722a83b210890
     0c00d30e4b09f80092b848cc243e0c00101310c2cdf8141258330107c78982
 71. 62388e38c8e3fc0330908e2c9019200483801fe621110f30e402c83b390897
     f00093084ac20000b23928cc2438080c183310f309f81002d8930145c3858a
 72. 62388e38c8e3fc0330908e2c9019200483819f8621118b308702a83b210893
     fc00d30e4b00000092b948cc243e0c00103310c2cdf8140258330107c3898a
 73. 62388e3fc8ff8ce330908e2c90001c80803203e021120df90022c83819088d
     900113084acb3060727828cc24000870163310f305361401f91f8149038588
 74. 62388e3fc8ff8c643090b24388001000800f9c60211191f90722a83c22060d
     9020330942cd300f027828c801090a6cc0b31647ca3800a1e01f810b01898c
 75. 3301823cfd29824c869709ff8020000812fa1cd6228c060d0304a17d200829
     1a2073e2b087019808c03c32830003f000c990c82618a92c01f86221c21c9c
 76. 3301823cfd29824c869709f0003f000812fa1cd6228c060d012c217d200829
     1a2073e2a1c3019808c05c3283f8020000c990c82618ab0481f86230861c9a
 77. 3301823cfd2982fc009709e990e0000812fa1cd6228c600d0034a17d200829
     1a2613e28387019808c03c328300013218df80082618a920d87862210f849c
 78. 485264169968264a1300f3fe56080501a1e1051618a8175b012189383e2502
     ec501a848985000128627d122832294c10f0001287a848984335b01218464d
 79. 485264169968264a1300f3fe56080501a1e1051618a8175b012189383e2502
     ec5010001287a84898427d122832294c10fa8489850001286335b01218464d
 80. 4912341699682c489300f9fcd6080501a1b0514618a2475b012189389e0d02
     ec501a81a185000128627d12881a294c10f0001287a81a184335b01218464d
 81. 4912341699682c489300f9fcd6080501a1b0514618a247a81a1849389e0d02
     ec5015b01219000128627d12881a294c10f00012875b0121833a81a184464d
 82. 4912341699682c489300f9fcd6080501a1b0514618a247a81a1849389e0d02
     ec50100012875b0121827d12881a294c10f5b01219000128633a81a184464d
 83. 4982341699682c419300fbbe96080501a1b2130618a2475b01418938ba2902
     ec50185161850001486179921813256810f00014878516184335b01418464d
 84. 4982341699682c419300fbbe96080501a1b2130618a2478516184938ba2902
     ec5015b014190001486179921813256810f00014875b01418338516184464d
 85. 4982341699690c099300fbbe96080029a1b2130618a2c35b01418938ba2902
     ec02985161830001486179905883256810f00280878512384335b280184a4d
 86. 4990309699690c099300fb1ebe080029a1961a0618a2c38512382938b02b82
     ec0295b2801900280861b9905883256818b00280875b280183385123824a4d
 87. 4990309699690c099300fb1ebe080029a1961a0618a2c38512382938b02b82
     ec02900280875b280181b9905883256818b5b2801900280863385123824a4d
 88. 405a00024240005a03a1915de5fc2a61a1915de5fc2a610040045dbdd88e08
     4752c990460300400443bdbdd88e084752d004004499046023b00400444e4f
(30) 2fa17480bd012e85f4000000000000000000000000000066bb9b8000000000
     0000066bb9b866bb9b8000000000000000066bb9b866bb9b80066bb9b80000 [2]:1
 89. 405a00024240005a03a1915f35f98a61a1915f35f98a610040045dbdd8a308
     1d52c990460300400443bdbdd8a3081d52d004004499046023b00400444e4f
(31) 2fa17480bd012e85f4000000000000000000000000000066bb9b8000000000
     0000066bb9b866bb9b8000000000000000066bb9b866bb9b80066bb9b80000 [2]:1
 90. 4186345689690c099308fabe86480029a1b2130618a2c35b01418d383a210a
     ec02905568830001486179905883256810f00280878512384335b280184a4d
 91. 64da6010c308065b2704921e843d436097df828405430110a545588050d907
     6c1a5855088705016001d8bf5010e500227050160185508861710a54543847
 92. 64da6010c308065b2704921e843d436097df828405430110a545588050d907
     6c1a5050160185508861d8bf5010e500227855088705016001710a54543847
 93. 64da6010c228145b2720905e843d436097df828407028102a945588050d907
     6c1a5855088705016001d8bf5010e500227050160185508861710a54543847
 94. 64da6010c068305b2720905e8438e36097df828402a28102a944dc3f1810e5
     002272801601a8508862788050d9076c1a9855088705016001d10a52543847
 95. 64da44128148225b2704921c5438e36097df805400e301a850887c0018d907
     6c1a910a524d28016002dc3f1810e50022b280160110a524c27a8508863847
 96. 402d3647c3647ccb31dead6044008823ad5ee022004445680400380000990f
     2cc015090d47a090e8a63800019661321815091137a09223a65966030e8911
(32) 000009b8309b83000000000ab2d56580000005d4cba98000033cc137060000
     00078ab2c0005d4c0019813706000000078ab2c0005d4c001980003cf00000 [2]:1
 97. 400024168168240003a4df9d45103ed1a4df9d45103ed10001083de8b8169f
     65428990c109000108237de8b8169f654290001082990c1083700010824e4f
(33) 2dd2d0217e840b4bb4000000000000000000000000000066f2e74000000000
     0000066f2e7466f2e74000000000000000066f2e7466f2e740066f2e740000 [2]:1
 98. 401e3647c3647ccb31decb6044008823b4dee022004444f00400380000990f
     2cc015090d47a090e8a63800019661321815091137a09223a65966030e8911
(34) 000009b8309b83000000000ab2d56580000005d4cba98000033cc137060000
     00078ab2c0005d4c0019813706000000078ab2c0005d4c001980003cf00000 [2]:1
 99. 401234d20349304803df7d801e00c367bf7d801e00c367680000180019c967
     9c2e0408c30f8888000678001938e67c1e11910000211430e6768032c560e1
(35) 3300c000fc000f03fc008200800100000082008001000007fbcf0030c03000
     6000007738e007738e0000c300c0018000006eb4d006eb4d0000198c301e1e [3]:1
100. 401234d20349304803df7d801e00c367bf7d801e00c367680000180019c967
     9c2e14888000808c30e678001938e67c1e0111430f29100006768032c560e1
(36) 3300c000fc000f03fc008200800100000082008001000007fbcf0030c03000
     6000007738e007738e0000c300c0018000006eb4d006eb4d0000198c301e1e [3]:1
101. 4012049200492048039f807fe600cc9b9f807fe600cc9b6800001807e60667
     fcc1c900430f68000006b807e60667fcc1d6800000900430e6b68000007475
(37) 00c00b6dc3b6d0030000240019d2330400240019d233049184c02768196800
     0006000600c09184c0380768196800000609184c0200600c1809184c020a0a [16]:4
(38) 3fc0f000ff000f03fc000000000000000000000000000007fbcf0000000000
     0000007fbcf007fbcf0000000000000000007fbcf007fbcf00007fbcf00000 [1]:1
102. 4012049200492048039f807fe600cc9b9f807fe600cc9b6800001807e60667
     fcc1c600430f98000006b807e60667fcc1d9800000600430e6b68000007475
(39) 3fc0f000ff000f03fc000000000000000000000000000007fbcf0000000000
     0000007fbcf007fbcf0000000000000000007fbcf007fbcf00007fbcf00000 [1]:1
103. 403f0492c0492cff31897e6006000c059fa4606000c003f8073cd800009007
     fcc196980f096818f040580001fe6120099698023d681813c03fe6030e1008
(40) 00003b6d33b6d3000000000961d2c3800000091e523c800000c3076da60000
     00060961c00291e40026076da6000000060961c00291e40026000030c00606 [2]:3
(37) 0000c0000c000000ce76818918523082605b8901d20384000000216db96800
     03380006300406030098216db8019ed000000601c006002c184019cc300e17 [16]:2
\end{verbatim}

\section*{Acknowledgments}
Obtaining the results of Sections~\ref{s:class} and~\ref{s:prop}
was funded by the Russian Science Foundation (grant 18-11-00136);
the work on Section~\ref{s:related}
was supported within the framework 
of the state contract of the Sobolev Institute
of Mathematics (FWNF-2022-0017).
The author thanks the Supercomputing
Center of the Novosibirsk State University for provided computational resources.


\begin{thebibliography}{10}

\bibitem{FDF:CorrImmBound}
D.~G. Fon-Der-Flaass.
\newblock A bound on correlation immunity.
\newblock {\em \href{http://semr.math.nsc.ru}{Sib. \`Elektron. Mat. Izv.}},
  4:133--135, 2007.
\newblock Online: \url{http://mi.mathnet.ru/eng/semr149}.

\bibitem{FDF:PerfCol}
D.~G. Fon-Der-Flaass.
\newblock Perfect $2$-colorings of a hypercube.
\newblock {\em \href{http://link.springer.com/journal/11202}{Sib. Math. J.}},
  48(4):740--745, 2007.
\newblock \DOI{10.1007/s11202-007-0075-4} translated from
  \href{http://www.mathnet.ru/php/journal.phtml?jrnid=smj\&option_lang=eng}{Sib.
  Mat. Zh.} 48(4) (2007), 923-930.

\bibitem{FDF:12cube.en}
D.~G. Fon-Der-Flaass.
\newblock Perfect colorings of the $12$-cube that attain the bound on
  correlation immunity.
\newblock {\em \href{http://semr.math.nsc.ru}{Sib. \`Elektron. Mat. Izv.}},
  4:292--295, 2007.
\newblock In Russian. English translation:
  \url{https://arxiv.org/abs/1403.8091}.

\bibitem{KO:alg}
P.~Kaski and P.~R.~J. {\"O}sterg{\aa}rd.
\newblock {\em Classification Algorithms for Codes and Designs}, volume~15 of
  {\em Algorithms Comput. Math.}
\newblock Springer, Berlin, 2006.
\newblock \DOI{10.1007/3-540-28991-7}.

\bibitem{KasPot08}
P.~Kaski and O.~Pottonen.
\newblock libexact user's guide, version 1.0.
\newblock Technical Report 2008-1, Helsinki Institute for Information
  Technology HIIT, 2008.

\bibitem{Khalyavin:2010.en}
A.~V. Khalyavin.
\newblock Estimates of the capacity of orthogonal arrays of large strength.
\newblock {\em
  \href{http://link.springer.com/journal/volumesAndIssues/11970}{Mosc. Univ.
  Math. Bull.}}, 65(3):130--131, 2010.
\newblock \DOI{10.3103/S0027132210030101}.

\bibitem{kirienko2002}
D.~Kirienko.
\newblock On new infinite family of high order correlation immune unbalanced
  {B}oolean functions.
\newblock In {\em Proceedings 2002 IEEE International Symposium on Information
  Theory, Lausanne, Switzerland, June 30 -- July 5, 2002}, page 465. IEEE,
  2002.
\newblock \DOI{10.1109/ISIT.2002.1023737}.

\bibitem{Kro:OA13}
D.~S. Krotov.
\newblock On the {OA}(1536,13,2,7) and related orthogonal arrays.
\newblock {\em
  \href{http://www.sciencedirect.com/science/journal/0012365X}{Discrete
  Math.}}, 343:111659/1--11, 2020.
\newblock \DOI{10.1016/j.disc.2019.111659}.

\bibitem{KroVor:2020}
D.~S. Krotov and K.~V. Vorob'ev.
\newblock On unbalanced {B}oolean functions with best correlation immunity.
\newblock {\em \href{http://www.combinatorics.org}{Electr. J. Comb.}},
  27(1):\#P1.45(1--24), 2020.
\newblock \DOI{10.37236/8557}.

\bibitem{nauty2014}
B.~D. McKay and A.~Piperno.
\newblock Practical graph isomorphism, {II}.
\newblock {\em J. Symb. Comput.}, 60:94--112, 2014.
\newblock \DOI{10.1016/j.jsc.2013.09.003}.

\bibitem{Tarannikov2000}
{Yu}. Tarannikov.
\newblock On resilient {B}oolean functions with maximal possible nonlinearity.
\newblock Cryptology ePrint Archive 2000/005, 2000.
\newblock \url{https://eprint.iacr.org/2000/005}.

\bibitem{VorFDF}
K.~V. Vorobev and D.~G. Fon-Der-Flaass.
\newblock On perfect $2$-colorings of the hypercube.
\newblock {\em \href{http://semr.math.nsc.ru}{Sib. \`Elektron. Mat. Izv.}},
  7:65--75, 2010.
\newblock In Russian, with English abstract.

\end{thebibliography}

\providecommand\href[2]{#2} \providecommand\url[1]{\href{#1}{#1}}
  \def\DOI#1{{\small\url{https://doi.org/#1}}}

\end{document}